\numberwithin{equation}{section}
\def\cx{{\mathcal X}}
\def\cl{{\mathcal L}}
\def\cc{{\mathcal C}}
\def\cm{{\mathcal M}}
\def\zz{{\mathbb Z}}
\def\nn{{\mathbb N}}
\def\chl{{\sf chiefl}}
\def\mml{{\sf minmaxl}}
\def\ml{{\sf modl}}
\def\nl{{\sf nl}}
\newtheorem{thm}{Theorem}[section]
\newtheorem{lemma}[thm]{Lemma}
\newtheorem{cor}[thm]{Corollary}
\newtheorem{prop}[thm]{Proposition}
\theoremstyle{definition}
\begin{document}

\title[Subgroup lattice characterization]{A new subgroup lattice characterization of finite solvable groups}         
\author[Shareshian]{John Shareshian$^1$}
\address{Department of Mathematics, Washington University, St. Louis, MO 63130}
\thanks{$^{1}$Supported in part by NSF Grant
DMS-0902142}
\email{shareshi@math.wustl.edu}

\author[Woodroofe]{Russ Woodroofe}
\address{Department of Mathematics, Washington University, St. Louis, MO 63130}
\email{russw@math.wustl.edu}

\date{}          

\begin{abstract}
We show that if $G$ is a finite group then no chain of modular elements in its subgroup lattice $\cl(G)$ is longer than a chief series.  Also, we show that if $G$ is a nonsolvable finite group then every maximal chain in $\cl(G)$ has length at least two more than the chief length of $G$, thereby providing a converse of a result of J. Kohler.  Our results enable us to give a new characterization of finite solvable groups involving only the combinatorics of subgroup lattices.  Namely, a finite group $G$ is solvable if and only if $\cl(G)$ contains a maximal chain $\cx$ and a chain $\cm$ consisting entirely of modular elements, such that $\cx$ and $\cm$ have the same length.
\end{abstract}

\maketitle

\section{Introduction}

\subsection{Main results} \label{mrsec}

Given a finite group $G$, let $\cl(G)$ be the subgroup lattice of $G$. We write $\mml(G)$ for the minimum length of a maximal chain in $\cl(G)$, $\chl(G)$ for the length of a chief series of $G$, and $\ml(G)$ for the maximum length of a chain of modular elements in $\cl(G)$. (The definition of a modular element in a lattice is given in Section \ref{defsec}.)  Our concluding result is the following characterization of finite solvable groups, using only the combinatorial structure of subgroup lattices.  As discussed below, our theorem is not the first such characterization.

\begin{thm} \label{concl}
Let $G$ be a finite group.  Then $G$ is solvable if and only if $\mml(G)=\ml(G)$.
\end{thm}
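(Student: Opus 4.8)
The plan is to reduce the biconditional to a comparison between $\mml(G)$ and the chief length $\chl(G)$, by first showing that $\ml(G)=\chl(G)$ holds for \emph{every} finite group $G$. One inequality, $\ml(G)\le\chl(G)$, is exactly the first main result quoted in the abstract. For the reverse inequality I would observe that every normal subgroup of $G$ is a modular element of $\cl(G)$ --- this is the classical consequence of Dedekind's modular law, since a normal $N$ permutes with every subgroup. Consequently any chief series $1=N_0<N_1<\dots<N_{\chl(G)}=G$ is itself a chain of modular elements, of length $\chl(G)$, whence $\ml(G)\ge\chl(G)$. Combining the two, $\ml(G)=\chl(G)$ for all $G$. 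It is worth stressing that this identity uses nothing about solvability, so it is available for both directions of the theorem.

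With this identity in hand, the condition $\mml(G)=\ml(G)$ is equivalent to $\mml(G)=\chl(G)$, and the two implications separate cleanly. For the forward direction, if $G$ is solvable then Kohler's theorem supplies a maximal chain of length $\chl(G)$, so that $\mml(G)=\chl(G)=\ml(G)$; this is precisely the result for which the present paper furnishes a converse. For the reverse direction I would argue contrapositively: if $G$ is nonsolvable, then the second main result gives $\mml(G)\ge\chl(G)+2$, and since $\ml(G)=\chl(G)$ we obtain $\mml(G)\ge\ml(G)+2>\ml(G)$, so in particular $\mml(G)\ne\ml(G)$.

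Assembled this way, the theorem is little more than bookkeeping once the two main results are granted, so the genuine obstacle lies entirely upstream: proving the nonsolvable estimate $\mml(G)\ge\chl(G)+2$. I expect that to be the crux, requiring a structural induction on nonsolvable $G$ --- peeling off a minimal normal subgroup and reducing, ultimately, to the (almost) simple case --- together with detailed information about lengths of maximal chains, and hence about maximal subgroups, in finite simple groups, where the classification is likely unavoidable. The only point in the assembly that I would take care to verify is that the identity $\ml(G)=\chl(G)$ genuinely holds for nonsolvable groups as well, since the reverse direction relies on it; but as noted above, the argument establishing it is insensitive to solvability.
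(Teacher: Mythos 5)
Your decomposition is exactly the paper's: establish $\ml(G)=\chl(G)$ for all finite $G$ (Theorem \ref{mod} gives $\ml(G)\leq\chl(G)$, and $\ml(G)\geq\chl(G)$ follows, as you say, from normal subgroups being modular), then use Kohler for solvable groups and Theorem \ref{nonsolv} for nonsolvable ones; your contrapositive handling of the nonsolvable direction is complete. However, there is one genuine gap, in the forward direction. Kohler's theorem produces \emph{some} maximal chain of length $\chl(G)$, which only yields the upper bound $\mml(G)\leq\chl(G)$, since $\mml$ is a \emph{minimum} over maximal chains. To conclude $\mml(G)=\chl(G)$ you also need the lower bound $\mml(G)\geq\chl(G)$, i.e., that no maximal chain in $\cl(G)$ is shorter than a chief series. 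This is not free: in a general nongraded lattice a maximal chain can be shorter than another chain (that phenomenon is the entire subject of the paper), so ``there exists a maximal chain of length $\chl(G)$'' does not by itself pin down the minimum.

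The missing inequality is a real theorem, and the paper supplies it in two ways. One is Stanley's observation (cited as \cite[2.1]{St1}): a chain of modular elements together with any other chain generates a distributive, hence graded, sublattice; applying this to a chief series and any maximal chain of $\cl(G)$ shows every maximal chain has length at least $\chl(G)$. The other is Corollary \ref{mmlgechl}, proved by induction on $|G|$ from the fact that $\chl(M)\geq\chl(G)-1$ for every maximal subgroup $M\leq G$ (Corollary \ref{cdcor}(1)). Inserting either of these makes your assembly correct and essentially identical to the paper's proof. Incidentally, the point you flagged for verification --- whether $\ml(G)=\chl(G)$ survives nonsolvability --- is unproblematic, since Theorem \ref{mod} is stated for all finite groups; the point that actually needed flagging was the one above.
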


It is not hard to see that a group $G$ is finite if and only if $\cl(G)$ is finite.  Indeed, any infinite group has either an infinite cyclic subgroup or infinitely many finite cyclic subgroups.  Therefore, Theorem \ref{concl} distinguishes finite solvable groups from all other groups.  

Every normal subgroup of $G$ is a modular element of $\cl(G)$.  It follows immediately that $\ml(G) \geq \chl(G)$.  Our next result, which is proved in Section \ref{modproof}, allows us to prove Theorem \ref{concl} by comparing $\chl(G)$ and $\mml(G)$.

\begin{thm} \label{mod}
For every finite group $G$, we have $\ml(G)=\chl(G)$.  In other words, no chain of modular elements in $\cl(G)$ is longer than a chief series.
\end{thm}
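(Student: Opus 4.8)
The plan is to show the two inequalities $\ml(G)\ge\chl(G)$ and $\ml(G)\le\chl(G)$ separately. The first is already noted in the excerpt: every normal subgroup is modular, so a chief series gives a chain of modular elements of length $\chl(G)$, whence $\ml(G)\ge\chl(G)$. The content of the theorem is therefore the reverse inequality, and I would attack it by induction on $|G|$, using the structure of a maximal chain of modular elements $M_0=1<M_1<\cdots<M_k=G$ with $k=\ml(G)$.

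First I would recall (from the definitions in Section \ref{defsec}, which I am permitted to cite) the basic properties of modular elements. A modular element $M$ of a lattice satisfies a modularity condition with respect to all other elements; in $\cl(G)$ this translates into useful group-theoretic facts, the most important being that permutability-type identities hold between $M$ and the other subgroups in the chain. The key structural step I expect to need is that if $M$ is a modular element of $\cl(G)$ and $N\trianglelefteq G$, then the image of $M$ in $G/N$ is modular in $\cl(G/N)$, and dually the interval $[N,G]$ in $\cl(G)$ is isomorphic to $\cl(G/N)$ in a way that carries modular elements to modular elements. This lets the induction run: I would pass to a quotient by a suitable normal subgroup so that the chief length drops by exactly one while the chain of modular elements loses at most one element.

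The crux of the argument, and the step I expect to be the main obstacle, is the base case of the induction, where $G$ has no proper nontrivial modular element strictly between the pieces of the chain forcing a reduction—equivalently, handling the situation where the bottom modular element $M_1$ of the chain is a minimal nontrivial modular element but is not normal. Here I must show that such a minimal modular element still contributes no more than one to the count against a chief factor. The plan is to invoke a classical result (I expect from the theory of modular subgroups, e.g. results of the type due to Schmidt or Zacher characterizing modular elements of $\cl(G)$) that a modular element of $\cl(G)$ is, up to the relevant factors, built from normal-like pieces, so that a chain of modular elements can be "straightened" into a chain refining toward a chief series without increasing its length. Concretely, I would argue that between consecutive modular elements $M_{i-1}<M_i$ the quotient-type factor $M_i/(M_{i-1}\cap M_i)$ or the relevant normal closure contributes at least one chief factor, giving $k\le\chl(G)$.

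Thus the skeleton is: (1) establish $\ml(G)\ge\chl(G)$ from normal subgroups being modular; (2) set up induction on $|G|$; (3) prove that modularity is inherited by quotients and lifts along the top of the chain; (4) reduce by a chief factor and apply the inductive hypothesis; and (5) dispose of the base case using the structure theory of modular subgroups. I anticipate that step (5)—controlling minimal modular elements that fail to be normal—is where essentially all the difficulty lies, and where I would expect the authors to deploy either a direct lattice-theoretic modularity computation or an appeal to the detailed classification of modular subgroups in finite groups.
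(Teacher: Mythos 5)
Your skeleton matches the paper's proof in outline: the easy inequality $\ml(G)\ge\chl(G)$ from normality, an induction that quotients by a well-chosen normal subgroup, inheritance of modularity in intervals/quotients (true, via Lemma \ref{modjoin} plus the fact that modular elements stay modular in intervals containing them), and a reduction of all the difficulty to the case where the bottom element $M_1$ of the modular chain is a minimal nontrivial modular element that is not normal. You even correctly guess that Schmidt-style structure theory of modular subgroups is what gets deployed there. But the proposal stops exactly where the proof has to start: your steps (4) and (5) are descriptions of what must be proved, not arguments, and there is no "classical result" that can simply be invoked to finish. The key missing idea is the paper's Lemma \ref{minmod}: if $M_1$ is a nontrivial modular element of $\cl(G)$, minimal among such, and not normal, then $M_1^G$ contains a subgroup $X$ of \emph{prime order} with $X\lhd G$. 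The paper has to assemble this from several results in Schmidt's book: Lemma \ref{512} forces $M_1$ to be a $q$-group; Lemma \ref{519} then splits into the case that $M_1$ is permutable---where $M_1^G$ is a normal $q$-subgroup, hypercentrally embedded by Theorem \ref{523}, so that $G/C_G(M_1^G)$ is a $q$-group by Lemma \ref{522} and a suitable order-$q$ subgroup of $M_1^G\cap Z(Q)$ is normal in $G$---and the case $G=M_1^G\times K$ with $M_1^G$ a nonabelian $P$-group, where a prime-order subgroup of the elementary abelian part works.

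The second gap is that your reduction step---"pass to a quotient so that the chief length drops by exactly one while the chain of modular elements loses at most one element"---is precisely the point that needs justification, and it is not automatic for an arbitrary minimal normal subgroup $N$: the joins $M_iN$ could a priori coincide at many indices, collapsing the chain by more than one. The paper's counting works only because the normal subgroup $X$ produced by Lemma \ref{minmod} has prime order $p$: whenever $X\not\leq M_j$ one gets $|M_jX|=p|M_j|$, so the chain $1<X\leq M_1X\leq\cdots\leq M_tX=G$ collapses at most once and still contains $t+1$ distinct elements, all modular by Lemma \ref{modjoin}; induction then applies inside $[X,G]\cong\cl(G/X)$. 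Your fallback suggestion that each consecutive pair $M_{i-1}<M_i$ "contributes at least one chief factor" is essentially a restatement of the theorem rather than a route to it. So while your plan is aimed in the right direction, it is missing both the existence statement (the prime-order normal subgroup inside $M_1^G$) and the order-counting argument that makes the induction go through.
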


If ${\cm}$ is a chain of modular elements in a lattice $L$ and $\cc$ is any other chain in $L$, then the sublattice of $L$ generated by $\cm$ and $\cc$ is a distributive lattice, and therefore graded.  (See for example \cite[2.1]{St1}, and its proof.)  It follows that $\mml(G) \geq \chl(G)$ for all finite groups $G$.  Thus the fact that $\mml(G)=\chl(G)$ when $G$ is solvable follows from the following result of J. Kohler.  (In fact, Kohler proves a stronger theorem involving indices.)

\begin{thm}[See Theorem 1 of \cite{Ko}] \label{solv}
If $G$ is a finite solvable group, then $\cl(G)$ contains a maximal chain whose length is the same as that of a chief series for $G$.
\end{thm}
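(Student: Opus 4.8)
The plan is to establish the nontrivial inequality $\mml(G) \le \chl(G)$ by induction on $|G|$; combined with the reverse inequality $\mml(G) \ge \chl(G)$ recorded above (via the distributive, hence graded, sublattice generated by a chain of modular elements together with any chain), this produces a maximal chain of length exactly $\chl(G)$. The inductive engine is the following reduction: it suffices to find a maximal subgroup $M < G$ with $\chl(M) = \chl(G) - 1$. Indeed, the induction hypothesis applied to $M$ supplies a maximal chain of $\cl(M)$ of length $\chl(M)$, and since $M$ is maximal in $G$ the cover $M \lessdot G$ extends it to a maximal chain of $\cl(G)$ of length $\chl(M) + 1 = \chl(G)$. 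The base case $G = 1$ is trivial, and $\chl(G) = 1$ forces $G \cong \zz/p$, where the claim is clear.

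So everything comes down to producing the right maximal subgroup. Let $N$ be a minimal normal subgroup of $G$; since $G$ is solvable, $N \cong (\zz/p)^a$ for some prime $p$, and $\chl(G/N) = \chl(G) - 1$. The clean case is when $N$ is complemented, equivalently $N \not\le \Phi(G)$. Here I would take any complement and enlarge it to a maximal subgroup $M$. A short argument settles the structure of $M$: since $N \not\le M$ one gets $MN = G$ by maximality, while $M \cap N$ is normal in $G = MN$ (being normalized by $M$ and, as $N$ is abelian, by $N$) and properly contained in $N$, so minimality forces $M \cap N = 1$. Thus $M \cong G/N$ and $\chl(M) = \chl(G/N) = \chl(G) - 1$, exactly as needed.

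The main obstacle is the Frattini case, in which every minimal normal subgroup is contained in $\Phi(G)$ and hence is non-complemented; now the chain cannot be routed around $N$ and one is forced to work with a maximal subgroup $M \supseteq N$. Writing $K = \mathrm{core}_G(M)$ and letting $H/K$ be the chief factor complemented by $M$, the additivity $\chl(L) = \chl(L/K) + c_L(K)$ of chief length over the normal subgroup $K$ (where $c_L(K)$ counts the $L$-chief factors inside $K$) yields $\chl(M) = \chl(G) - 1 + \big(c_M(K) - c_G(K)\big)$. The correction term is nonnegative and vanishes exactly when each $G$-chief factor inside $K$ remains irreducible on restriction to $M$. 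The heart of the matter is precisely that an irreducible $\mathbb{F}_p G$-module may restrict reducibly to a maximal subgroup, so that $\chl$ can rise by more than one; controlling this — by choosing the complemented chief factor as low as possible and exploiting that $\Phi(G)$ is nilpotent — is where the real work lies. I expect this module-theoretic bookkeeping in the Frattini case to be the crux, with the surrounding induction being routine.
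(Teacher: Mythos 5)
Your overall skeleton is sound, and your complemented case is correct: if some minimal normal subgroup $N$ satisfies $N \not\leq \Phi(G)$, then a maximal subgroup $M$ not containing $N$ has $M \cap N = 1$ and $M \cong G/N$, so $\chl(M) = \chl(G) - 1$ and the induction closes. (The case where some minimal normal subgroup $N$ has prime order is also easy, Frattini or not: lift a maximal chain of $\cl(G/N)$ to the interval $[N,G]$ and prepend $1 < N$.) But the proposal is not a proof, because the remaining case --- every minimal normal subgroup of $G$ is non-cyclic and contained in $\Phi(G)$ --- is exactly where you stop, and that case is neither vacuous nor routine. Concretely, let $P = \zz_4 \times \zz_4$ and let $\zz_3$ act faithfully on $P$ via a matrix of order $3$ in $GL_2(\zz/4)$ whose reduction mod $2$ acts irreducibly on $P/\Phi(P)$; set $G = P \rtimes \zz_3$. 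Then $V := \Phi(P) \cong \zz_2 \times \zz_2$ is the unique minimal normal subgroup of $G$, and $V \leq \Phi(G)$ (since $\Phi(P) \leq \Phi(G)$ whenever $P \lhd G$). So neither of your resolved cases applies to $G$, and your induction never gets off the ground for this group; the same obstruction can recur inside the maximal subgroups your induction descends to.

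What the missing case actually requires can be read off from this example: $\chl(G) = 3$, and the witnessing maximal chain is $1 < \zz_3 < V\zz_3 < G$. Its bottom term is not related to a complement of $V$ in $G$ --- none exists, which is the whole point of the Frattini case --- but is a complement to $V$ inside the \emph{proper} subgroup $V\zz_3$, and moreover one acting irreducibly on $V$, which is what makes $\zz_3$ maximal in $V\zz_3$. Producing such ``local complements with irreducible action'' in general, or alternatively strengthening the inductive statement so that it survives passage to $G/N$ (Kohler in fact proves a stronger theorem controlling the indices $[H_i:H_{i-1}]$ by the orders of the chief factors, which is the form in which the induction can be pushed through), is the real content of the theorem. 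Your final paragraph names this difficulty accurately but does not overcome it; note that the paper itself offers no proof to fall back on here --- it cites Kohler --- so the statement carries the full weight of that argument, and as it stands your write-up proves the theorem only modulo its hardest case.
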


Note that Theorem \ref{mod} for solvable groups follows from Theorem \ref{solv}.  Indeed, let $\cm$ be a maximal chain in $\cl(G)$ having length $\chl(G)$.  If $\cc$ is a chain of modular elements in $\cl(G)$ then, as $\cc$ and $\cm$ together generate a graded lattice and $\cm$ is maximal, we see that $\cc$ is not longer than $\cm$.

In Section \ref{nsproof}, we prove the following result, which when combined with Theorems \ref{mod} and \ref{solv}, proves Theorem \ref{concl}.

\begin{thm} \label{nonsolv}
If $G$ is a nonsolvable finite group then $\mml(G) \geq \chl(G)+2$.
\end{thm}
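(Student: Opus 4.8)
The plan is to argue by induction on $|G|$, reducing everything to one structural inequality. The key lemma is that for every normal subgroup $N$ of a finite group $G$,
\[
\mml(G) \geq \mml(N) + \mml(G/N).
\]
To prove it I would fix a maximal chain $1 = H_0 \lessdot H_1 \lessdot \cdots \lessdot H_r = G$ of minimum length $r = \mml(G)$ and analyze each covering $H_i \lessdot H_{i+1}$ through the two maps $X \mapsto XN$ and $X \mapsto X \cap N$. Using Dedekind's modular law (valid because $N$ is normal, hence a modular element of $\cl(G)$) I would establish the parallelogram dichotomy: for each $i$, exactly one of the following holds --- either $H_i N = H_{i+1} N$ and $H_i \cap N \lessdot H_{i+1} \cap N$, or $H_i N \lessdot H_{i+1} N$ and $H_i \cap N = H_{i+1} \cap N$. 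Granting this, the strictly increasing values among the $H_i N$ assemble into a saturated chain from $N$ to $G$, which by the correspondence theorem is a maximal chain in $\cl(G/N)$ and so has length at least $\mml(G/N)$; symmetrically, the strictly increasing values among the $H_i \cap N$ form a maximal chain in $\cl(N)$ of length at least $\mml(N)$. Since every step is counted in exactly one of the two chains, summing the two counts yields the lemma.

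Second, I would record the base case: a nonabelian simple group $S$ satisfies $\mml(S) \geq 3$, which is exactly the desired bound since $\chl(S) = 1$. A maximal chain of length at most $2$ would force a maximal subgroup $M$ of $S$ with $1 \lessdot M$, that is, $|M|$ prime. But then $M$ is self-normalizing (being maximal and not normal), hence a Sylow subgroup equal to its own centralizer, and Burnside's normal $p$-complement theorem produces a proper nontrivial normal subgroup of $S$, contradicting simplicity.

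For the inductive step, let $G$ be nonsolvable and not simple, and choose a minimal normal subgroup $N$, so that $N$ is a single chief factor and $\chl(G) = 1 + \chl(G/N)$. If $G/N$ is nonsolvable, induction gives $\mml(G/N) \geq \chl(G/N) + 2 = \chl(G) + 1$, and the key lemma together with $\mml(N) \geq 1$ gives $\mml(G) \geq \chl(G) + 2$. If instead $G/N$ is solvable, then $N$ is nonsolvable, hence a direct product of nonabelian simple groups; applying the base case and the key lemma inside $N$ gives $\mml(N) \geq 3$, while the general inequality $\mml(G/N) \geq \chl(G/N)$ noted in the excerpt yields $\mml(G) \geq \mml(N) + \mml(G/N) \geq 3 + (\chl(G) - 1) = \chl(G) + 2$.

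I expect the main obstacle to be the parallelogram dichotomy, and specifically the verification that the images $H_i N$ and the intersections $H_i \cap N$ really assemble into \emph{unrefinable} chains in $G/N$ and in $N$, so that the lower bounds $\mml(G/N)$ and $\mml(N)$ may legitimately be invoked; once this is in hand, the induction is routine bookkeeping. It is worth noting that this route avoids any appeal to the classification of minimal simple groups, requiring only Burnside's theorem to settle the simple base case.
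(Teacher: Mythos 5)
Your key lemma $\mml(G)\geq\mml(N)+\mml(G/N)$ is false, and the induction built on it collapses. Take $G=S_4$ and $N=V_4$, the Klein four-group of double transpositions: then $G/N\cong S_3$, and the maximal chain $1<\zz_3<A_4<S_4$ shows $\mml(S_4)=3$, while $\mml(V_4)+\mml(S_3)=2+2=4$. The failure is exactly at the point you flagged as the main obstacle. For a covering pair $H_i\lessdot H_{i+1}$ it is true that exactly one of $H_iN$, $H_i\cap N$ moves strictly, and that when the image moves it moves by a cover (this is Lemma \ref{liusag}); but when the \emph{intersection} moves, it can jump arbitrarily far rather than by a single cover. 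Concretely, for the cover $S_3\lessdot S_4$ with $N=V_4$ one has $S_3V_4=S_4V_4=S_4$, while $S_3\cap V_4=1$ and $S_4\cap V_4=V_4$: the intersection jumps past the order-two subgroups. So the intersections form a chain from $1$ to $N$ that need not be saturated in $\cl(N)$, and the lower bound $\mml(N)$ cannot be invoked; all this argument yields is one repeated term in the image chain, i.e.\ $\mml(G)\geq\mml(G/N)+1$ when $N\neq 1$, which is Lemma \ref{projdp}. Note also that your lemma is incompatible with Kohler's theorem (Theorem \ref{solv}): for any solvable group with a noncyclic chief factor, such as $S_4$, iterating your lemma along a chief series would force $\mml(G)>\chl(G)$. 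In fact the true inequality runs the other way, $\mml(G)\leq\mml(N)+\mml(G/N)$, by concatenating a maximal chain of $\cl(N)$ with the preimage of one in $\cl(G/N)$.

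The damage is concentrated precisely where the theorem is hardest. Your base case is correct (the Burnside argument showing $\mml(S)\geq 3$ for $S$ nonabelian simple is essentially the paper's Lemma \ref{minnor} specialized to a simple group), and your case ``$G/N$ nonsolvable'' survives, since it only needs the true inequality $\mml(G)\geq\mml(G/N)+1$ together with induction. But the case ``$G/N$ solvable, $N$ a nonsolvable minimal normal subgroup'' genuinely requires a new idea: a shortest maximal chain of $G$ need not pass through $N$ or interact with it in any controlled way, so the bound $\mml(N)\geq 3$ cannot be transported upward. The paper's proof goes around this by inducting at the \emph{top} of the chain rather than the bottom: given a shortest maximal chain $1=M_0<\cdots<M_r=G$, it splits on whether the maximal subgroup $M_{r-1}$ is solvable, and the solvable case is settled by Proposition \ref{chlesns} --- a solvable maximal subgroup of a nonsolvable group has strictly \emph{larger} chief length --- whose proof, via Lemmas \ref{autchs} and \ref{minnor} and Corollary \ref{cdcor}, is where the real group theory enters. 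Your proposal contains no substitute for that step.
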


%


It follows from Theorems \ref{solv} and \ref{nonsolv} that there is no finite group $H$ satisfying $\mml(H)=\chl(H)+1$.  We show in Section \ref{chdiffex} that for each $k \geq 2$, there exists some group $H$ such that $\mml(H)=\chl(H)+k$.  Indeed, after noting that $\mml(A_5)=3$ we produce, for each $n \geq 1$, a direct product $G_n$ of $n$ pairwise nonisomorphic simple groups satisfying $\mml(G_n)=2n+2$.


\subsection{Background and motivation} \label{histsec}

As mentioned above, Theorem \ref{concl} is not the first characterization of finite solvable groups that uses only the combinatorial structure of subgroup lattices.  Before describing earlier characterizations, we remark that long before they were discovered, it was shown independently by M. Suzuki in \cite{Su} and by G. Zacher in \cite{Za} that if exactly one of the finite groups $G,H$ is solvable then $G,H$ do not have isomorphic subgroup lattices.   In \cite{Ja}, B. V. Jakovlev proves that the same result holds without the condition that $G$ is finite.  

In \cite{Sch1}, R. Schmidt proves the following result. (This result is described in English in \cite[Theorem~5.3.5]{Sch2}.)

\begin{thm}[R. Schmidt] \label{schm}
Let $G$ be a finite group.  The following conditions on $G$ are equivalent.
\begin{enumerate}
  \item $G$ is solvable.
  \item There exists a chain $1=N_0<\ldots<N_r=G$ in $\cl(G)$ such that each $N_i$ is modular in $\cl(G)$ and, for each $i \in [r]$, the interval $[N_{i-1},N_i]$ in $\cl(G)$ is a modular lattice.
  \item There exists a chain $1=H_0<\ldots<H_s=G$ in $\cl(G)$ such that, for each $i \in [s]$, $H_{i-1}$ is modular in $\cl(H_i)$, and the interval $[H_{i-1},H_i]$ is a modular lattice.
  \item There exists a chain $1=S_0<\ldots<S_t=G$ in $\cl(G)$ such that, for each $i \in [t]$, $S_{i-1}$ is a maximal subgroup of $S_i$ and is modular in $\cl(S_i)$.
\end{enumerate}
\end{thm}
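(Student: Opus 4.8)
The plan is to prove the equivalence by establishing the easy forward implications from solvability, recording the trivial relations among (2), (3), (4), and isolating a single substantive reverse implication. Concretely, I would prove (1)$\Rightarrow$(2), (2)$\Rightarrow$(3), (1)$\Rightarrow$(4) and (4)$\Rightarrow$(3), all of which are routine, and then prove the one hard implication (3)$\Rightarrow$(1). Since (1)$\Rightarrow$(2)$\Rightarrow$(3)$\Rightarrow$(1) closes a loop, and (4)$\Rightarrow$(3)$\Rightarrow$(1)$\Rightarrow$(4) folds in the remaining condition, all four are then equivalent.

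For (1)$\Rightarrow$(2) I take a chief series $1=N_0<\cdots<N_r=G$. Each $N_i$ is normal, hence modular in $\cl(G)$ by Dedekind's identity, and since $N_{i-1}\trianglelefteq N_i$ the correspondence theorem gives $[N_{i-1},N_i]\cong\cl(N_i/N_{i-1})$; as $G$ is solvable the chief factor $N_i/N_{i-1}$ is elementary abelian, so this interval is the lattice of subspaces of an $\mathbb{F}_p$-vector space and is modular. For (2)$\Rightarrow$(3) I use the standard fact that a modular element restricts to a modular element of any lower interval: if $m$ is modular in a lattice $L$ and $c\in L$, then $m\wedge c$ is modular in $[\hat 0,c]$; applied with $c=N_i$ and $N_{i-1}\le N_i$ this makes $N_{i-1}$ modular in $\cl(N_i)$, while $[N_{i-1},N_i]$ is the same modular interval as before. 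For (1)$\Rightarrow$(4) I refine the chief series, inserting between $N_{i-1}$ and $N_i$ the subgroups corresponding to a maximal flag of subspaces of $N_i/N_{i-1}$; each inserted step is maximal in the next, and since $N_i/N_{i-1}$ is abelian every such subgroup is normal in the one above it, hence modular. Finally (4)$\Rightarrow$(3) is immediate, since $S_{i-1}$ maximal in $S_i$ makes $[S_{i-1},S_i]$ a two-element chain, trivially modular.

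The substance is (3)$\Rightarrow$(1), which I would prove by strong induction on $|G|$. Let $M=H_{s-1}$; the truncated chain $1<\cdots<M$ witnesses (3) for $M$, so by induction $M$ is solvable, and it remains to deduce that $M$ modular in $\cl(G)$ with $[M,G]$ modular forces $G$ solvable. If $M$ is maximal in $G$ I invoke the \emph{structure theorem for modular maximal subgroups}: such an $M$ is either normal, in which case $G/M\cong C_p$, or non-normal, in which case $G/M_G$ is a nonabelian $P$-group in the sense of Iwasawa; in both cases $G/M_G$ is solvable, and since $M_G\le M$ is solvable, so is $G$. If $M$ is not maximal, choose a maximal subgroup $V$ with $M\le V$; then $[M,V]$ is modular (an interval of the modular lattice $[M,G]$) and $M$ is modular in $\cl(V)$ by the restriction fact above, so appending the step $M<V$ to a chain witnessing (3) for $M$ shows $V$ satisfies (3). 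As $|V|<|G|$, induction gives $V$ solvable, and then $V$ as a modular maximal subgroup of $G$ yields $G$ solvable exactly as in the maximal case.

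I expect two main obstacles, both genuinely group-theoretic. The first is the structure theorem for modular maximal subgroups, namely that a non-normal modular maximal subgroup forces a $P$-group quotient; this is the real content and rests on Iwasawa's analysis of groups constrained by modularity. The second, needed in the non-maximal case, is the assertion that a maximal subgroup $V$ lying above the modular element $M$ inside the modular interval $[M,G]$ is itself modular in $\cl(G)$; since modular elements do not form a sublattice in general, this is not formal and must be extracted from the detailed structure of $[M,G]$ for modular $M$. Once these two facts are in hand the induction closes routinely, and it is worth noting that, because prime-index and $P$-group extensions are manifestly solvable, the argument never appeals to the classification of finite simple groups.
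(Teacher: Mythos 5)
First, a point of orientation: the paper does not prove this theorem at all. It is quoted as background, attributed to R.~Schmidt's paper \cite{Sch1} and to \cite[Theorem~5.3.5]{Sch2}, so there is no in-paper proof to compare yours against; your proposal has to be judged on its own merits as a reconstruction of Schmidt's argument. Your treatment of the easy implications is correct: (1)$\Rightarrow$(2) via a chief series, (2)$\Rightarrow$(3) by restricting modularity to the interval $\cl(N_i)=[1,N_i]$ (which contains $N_{i-1}$, so this is the formal restriction fact, not the more general $m\wedge c$ statement you quote), (1)$\Rightarrow$(4) via a composition series, and (4)$\Rightarrow$(3) trivially.

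The genuine gap is exactly the one you flag but do not close, and it is not a technicality: in the non-maximal case of (3)$\Rightarrow$(1) you need the maximal subgroup $V$ with $M\le V<G$ to be modular in $\cl(G)$, and nothing in your argument supplies this. Without it, knowing that $V$ is solvable gives you nothing about $G$, since a solvable maximal subgroup of a finite group certainly does not force solvability ($A_5$ has only solvable maximal subgroups). When $M\lhd G$ the needed claim is in fact provable by an elementary Dedekind-law computation (modularity lifts along normal subgroups: if $N\lhd G$, $N\le V$, and $V/N$ is modular in $\cl(G/N)$, then $V$ is modular in $\cl(G)$, and here $V/M$ is automatically modular in the modular lattice $[M,G]\cong\cl(G/M)$). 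But your inductive hypothesis only gives that $M$ is modular, not normal, and for non-normal modular $M$ the lifting claim requires Schmidt's structure theorem for arbitrary modular subgroups (\cite[Theorem~5.1.14]{Sch2}), i.e.\ precisely the ``detailed structure of $[M,G]$'' you defer. So as written the induction does not close, and the deferred step is where the real content beyond the maximal case lives.

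If you want a complete argument along your lines, the cleaner route is to handle the inductive step of (3)$\Rightarrow$(1) directly rather than reducing to a maximal $V$: with $M=H_{s-1}$ solvable by induction and modular in $\cl(G)$, use the normal series $1\le M_G\le M^G\le G$. Here $M_G\le M$ is solvable; $M^G/M_G$ is solvable by Schmidt's structure theorem for modular subgroups (for permutable $M$ it is even hypercentrally embedded, as in Theorem 5.2.3 of \cite{Sch2}, and otherwise P-group factors appear); and $G/M^G$ is solvable because $[M^G,G]$ is an upper interval of the modular lattice $[M,G]$, so $\cl(G/M^G)$ is modular and Iwasawa's theorem (or an easy induction from the fact that simple groups have no nontrivial proper modular subgroups, \cite[Theorem~5.3.1]{Sch2}) gives solvability. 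This uses the same two group-theoretic inputs you identify, but applied to $M$ itself, which is what makes the induction go through without the unproven lifting claim.
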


Note that if $G$ is solvable, then a chief series for $G$ satisfies condition (2) of Theorem \ref{schm} and a composition series for $G$ satisfies condition (4).  On the other hand, if $G$ is not solvable, then no normal series for $G$ satisfies (2), and no subnormal series satisfies (3) or (4).  It is not the case that every modular element of $\cl(G)$ is normal in $G$.  Indeed, upon considering $\cl(S_3)$, we see that it is in general impossible to discern solely from the combinatorial structure of $\cl(G)$ whether a modular element is normal.  One might view the key point of Theorem \ref{schm} to be that, when attempting to derive facts about the algebraic structure of $G$ from the combinatorial structure of $\cl(G)$, one can sometimes get away with ignoring the difference between modular elements and normal subgroups.  The reduction of Theorem \ref{concl} to Theorems \ref{solv} and \ref{nonsolv} through Theorem \ref{mod} is another example of this point at work.

Another motivating factor in our study of chains in subgroup lattices is the hope of developing a nongraded analogue of the theory of supersolvable lattices.  The following characterization of solvable finite groups, due to the present authors, will be pertinent to our brief discussion of these lattices.

\begin{thm}[See \cite{Sh,Wo}] \label{shwo}
Let $G$ be a finite group.  The following conditions on $G$ are equivalent.
\begin{itemize}
  \item[(a)] $G$ is solvable.
  \item[(b)] $\cl(G)$ admits an EL-labeling.
  \item[(c)] The order complex of $\cl(G)$ is (nonpure) shellable.
\end{itemize}
\end{thm}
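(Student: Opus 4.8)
The plan is to prove the three conditions equivalent by establishing the cycle (a) $\Rightarrow$ (b) $\Rightarrow$ (c) $\Rightarrow$ (a), with essentially all of the content in the first and last implications. The middle implication is general poset topology: a (nonpure) EL-labeling of a bounded poset induces a shelling of the order complex of its proper part, by the theory of Björner and Wachs, so an EL-labeling of $\cl(G)$ at once yields that the order complex is shellable. Since $\cl(G)$ is typically nongraded even for solvable $G$, it is essential throughout to work with the \emph{nonpure} versions of both EL-shellability and shellability; I would flag this at the outset so that no purity hypotheses sneak in.

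For (a) $\Rightarrow$ (b) I would construct an EL-labeling by induction on $|G|$. The structural fact that makes solvable groups tractable is that \emph{every maximal subgroup of a solvable group has prime-power index}; consequently, for every covering relation $H \lessdot K$ in $\cl(G)$ (note $K$ is solvable and $H$ is maximal in $K$) the index $[K:H]$ is a power of a single prime $p(H,K)$, which I would take as the primary component of the label. Since primes alone cannot single out a unique increasing chain, I would refine the label by recording, in a fixed total order, the chief factor of $K$ that the cover ``crosses,'' i.e.\ data of the action of $K$ on the relevant minimal normal section. The induction is organized around a minimal normal subgroup $N$, which is elementary abelian: the interval $[N,G]\cong \cl(G/N)$ and each $[1,H]=\cl(H)$ are covered by the inductive hypothesis, while $[1,N]=\cl(N)$ is a subspace lattice and is labeled in the standard way. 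The substantive labor is checking the two EL conditions—that each interval $[H,K]$ has a unique strictly increasing maximal chain and that this chain is lexicographically least—and it is here that the prime-power-index theorem and the dictionary between chief factors and maximal subgroups of solvable groups do the real work.

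The deepest implication, and the main obstacle, is (c) $\Rightarrow$ (a), which I would prove in contrapositive form: if $G$ is nonsolvable then the order complex of $\cl(G)$ is not shellable. The reduction is uniform and topological. Shellability of an order complex is inherited by every open interval (links of faces in a shellable complex are shellable), and whenever $H \trianglelefteq K$ in $\cl(G)$ the interval $[H,K]$ is isomorphic to $\cl(K/H)$. A nonsolvable $G$ always has a section $K/H$ (with $H \trianglelefteq K \le G$) isomorphic to a \emph{minimal simple group} $S$: take a minimal nonsolvable subgroup $M$ and pass to $M$ modulo its solvable radical, which one checks is nonabelian simple with all proper subgroups solvable. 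Thus $[H,K]\cong\cl(S)$ is an interval of $\cl(G)$, so it suffices to show that $\cl(S)$ is not shellable for $S$ minimal simple. Here I would use that nonpure shellable complexes are sequentially Cohen--Macaulay, so that each pure skeleton of the complex is Cohen--Macaulay; in particular the subcomplex $\Delta^{[d]}$ generated by the top-dimensional facets (the longest maximal chains of the proper part of $\cl(S)$) must be Cohen--Macaulay of dimension $d\ge 1$ and hence connected. The obstruction to exhibit is therefore that $\Delta^{[d]}$ is \emph{disconnected}: the longest chains of $\cl(S)$ are confined to individual maximal subgroups that pairwise intersect too small to share the relevant vertices, so these top facets fall into distinct components. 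I would verify this disconnectedness across Thompson's classification of minimal simple groups—the families $\mathrm{PSL}(2,q)$, the Suzuki groups $\mathrm{Sz}(2^p)$, and $\mathrm{PSL}(3,3)$—and I expect this case analysis to be the crux: it is the one point where deep group theory (the classification of minimal simple groups) seems unavoidable, and where a family-by-family check, rather than a single clean argument, is required.
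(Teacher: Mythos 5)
The paper itself does not prove this theorem: it is quoted as background, with the equivalence (a)$\Leftrightarrow$(c) attributed to \cite{Sh}, the implication (a)$\Rightarrow$(b) to \cite{Wo}, and (b)$\Rightarrow$(c) to the general Bj\"orner--Wachs theory. Your cycle (a)$\Rightarrow$(b)$\Rightarrow$(c)$\Rightarrow$(a) matches that citation structure exactly, and two of your three implications are faithful reconstructions of the cited arguments. The (b)$\Rightarrow$(c) step is indeed just the fact that a (nonpure) EL-labeling induces a (nonpure) shelling. Your (c)$\Rightarrow$(a) outline is essentially Shareshian's actual proof: shellability passes to open intervals (links of faces are shellable, and a join is shellable only if its factors are); a nonsolvable group has a section $M/\operatorname{rad}(M)$ that is a minimal simple group (your solvable-radical argument is correct, since a proper normal subgroup of that quotient pulls back to a proper, hence solvable, normal subgroup of $M$, forcing it into the radical); shellable implies sequentially Cohen--Macaulay, so the subcomplex generated by the top-dimensional facets must be connected; and disconnectedness is verified over Thompson's list. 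For $A_5$, for instance, the top facets are the $15$ chains $\zz_2<\zz_2\times\zz_2<A_4$, and since each involution and each Klein four-group of $A_5$ lies in a unique $A_4$, these facets fall into five components, one per $A_4$ --- exactly the kind of check \cite{Sh} carries out family by family.

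The genuine soft spot is your (a)$\Rightarrow$(b) implication, and the problem is structural, not merely omitted detail. An EL-labeling must produce a unique lexicographically-first increasing maximal chain in \emph{every} interval, but your induction --- labeling $[1,N]$, $[N,G]\cong\cl(G/N)$, and the lattices $[1,H]=\cl(H)$ for proper $H$ --- never touches the intervals $[H,G]$ with $H$ nontrivial and non-normal, and it assigns no labels at all to covers $K\lessdot G$ with $N\not\leq K$, since these lie in none of the inductively labeled sublattices; moreover, the inductively obtained labelings of the various $\cl(H)$ need not agree on shared covers. These are precisely the intervals and covers where the work lies, so the induction as organized cannot close. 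Woodroofe's proof in \cite{Wo} is not an induction: it fixes one chief series $1=G_0\lhd G_1\lhd\cdots\lhd G_n=G$ and defines a single global labeling, essentially $\lambda(H\lessdot K)=\min\{i:K\leq HG_i\}$, then verifies both EL conditions for all intervals simultaneously using Dedekind's law and the abelianness of chief factors of solvable groups. Your secondary label (``the chief factor the cover crosses'') is close in spirit to this, but your primary label by the prime dividing $[K:H]$ plays no role in that argument and would create extra uniqueness problems; to repair your sketch you would essentially have to discard the inductive scaffolding and redo the direct verification of \cite{Wo}.
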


We will not define the terms ``EL-labeling", ``order complex" and ``(nonpure) shellable" here, as we will not need them outside of the present discussion.  These terms are defined in \cite{BjWa}.   The equivalence of (a) and (c) is proved in \cite{Sh}.  Every poset admitting an EL-labeling has a shellable order complex, and the fact that (a) implies (b) is proved in \cite{Wo}.

Supersolvable lattices were first defined and studied by R. Stanley in \cite{St1}.  A finite lattice $L$ is called supersolvable if it contains a maximal chain $\cm$ such that the sublattice of $L$ generated by $\cm$ and any other chain $\cc$ is distributive.  Such a chain $\cm$ is called an $M$-chain.  As noted above, a maximal chain consisting entirely of modular elements is an $M$-chain.  Thus, a finite group $G$ is supersolvable if and only if a chief series for $G$ is an $M$-chain in $\cl(G)$ (hence the name ``supersolvable lattice").  In addition to subgroup lattices of supersolvable finite groups, there are many other interesting classes of supersolvable lattices.  These lattices have received considerable attention (see for example \cite{St2,Bj,JaTe,Te,McN,Th}).  We mention in particular the work of A. Bj\"orner in \cite{Bj}, where it is shown that a supersolvable lattice admits an EL-labeling.

Since distributive lattices are graded, supersolvable lattices are also graded.  It follows from Bj\"orner's work that if $G$ is a supersolvable finite group then the order complex of $\cl(G)$ is shellable.  On the other hand, K. Iwasawa showed in \cite{Iw} that $\cl(G)$ is graded (and therefore has pure order complex) if and only if $G$ is supersolvable.  Thus, in \cite{Bj}, the question of shellability when $\cl(G)$ is graded is settled by showing that the subgroup lattices in question belong to a larger class of lattices whose combinatorial structure guarantees their shellability.  The contribution of \cite{Wo} to Theorem \ref{shwo} is an extension Bj\"orner's work to nongraded subgroup lattices, but this extension requires a close examination of the algebraic structure of solvable groups. It would be interesting to extend the ideas of Stanley and Bj\"orner, by finding a large class ${\mathcal S}$ of lattices such that
\begin{itemize}
  \item if $G$ is a finite solvable group then $\cl(G) \in {\mathcal S}$,
  \item ${\mathcal S}$ contains interesting members that are not subgroup lattices, and
  \item combinatorics alone guarantees that each $L \in {\mathcal S}$ admits an EL-labeling.
\end{itemize}

We can rephrase Iwasawa's result to say that $G$ is supersolvable if and only if {\it every} maximal chain in $\cl(G)$ has length $\ml(G)$.  Theorem \ref{concl} says that $G$ is solvable if and only if {\it some} maximal chain in $\cl(G)$ has length $\ml(G)$, and can be seen as a nongraded analogue of Iwasawa's result.  Perhaps this is a first step towards finding a good definition of a ``solvable lattice".  
Interesting previous work involving nongraded analogues of supersolvability appears in \cite{BlSa,LiSa,McNTh}.  However, there exist finite solvable groups (such as $S_4$) whose subgroup lattices are not of the type studied therein.

\section{Definitions and notation} \label{defsec}

All groups and lattices discussed here are assumed to be finite.  For a group $G$, $\cl(G)$ will denote the set of all subgroups of $G$, partially ordered by inclusion.  Then $\cl(G)$ is a lattice, with respective meet and join operations $H \wedge K=H \cap K$, $H \vee K=\langle H,K \rangle$.

A {\it chain} in $\cl(G)$ is a subset that is totally ordered by the inclusion relation.  Such a chain is {\it maximal} if it is not properly contained in any other chain.  Every maximal chain contains $1$ and $G$.

An element $m$ of a lattice $L$ is {\it modular} in $L$ if 
\begin{itemize}
\item $x \vee (m \wedge y)=(x \vee m) \wedge y$ for all $x,y \in L$ satisfying $x \leq y$, and
\item $m \vee (x \wedge n)=(m \vee x) \wedge n$ for all $x,n \in L$ satisfying $m \leq n$.
\end{itemize}

By Dedekind's modular law for groups (see for example \cite[1.1.1]{KuSt}), every normal subgroup of a group $G$ is modular in $\cl(G)$.  Note that if $m$ is modular in $L$ and some interval $[a,b]:=\{x \in L:a \leq x \leq b\}$ from $L$ contains $m$, then $m$ is also modular in $[a,b]$.  We will need the following fact about modular elements, which is well known and appears, for example, as Theorem 2.1.6(d) in \cite{Sch2}.

\begin{lemma} \label{modjoin}
If $m,n$ are modular elements in the lattice $L$, then $m \vee n$ is modular in $L$.
\end{lemma}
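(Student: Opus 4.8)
The plan is to verify directly that $p := m \vee n$ satisfies the two conditions in the definition of a modular element, taking $L$, $m$, $n$ as given. The first condition (the one governing pairs $x \le y$) will be the hard one.

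First I would dispose of the second condition, which asks that $p \vee (x \wedge q) = (p \vee x) \wedge q$ whenever $p \le q$. Since $q \ge p = m \vee n$ forces $q \ge n$ and $q \ge m$, I can peel the two generators off one at a time:
\[
(p \vee x) \wedge q = \big(n \vee (m \vee x)\big) \wedge q = n \vee \big((m \vee x) \wedge q\big) = n \vee \big(m \vee (x \wedge q)\big) = p \vee (x \wedge q),
\]
where the second equality is the modularity of $n$ (second condition, ceiling $q \ge n$) applied to the element $m \vee x$, and the third is the modularity of $m$ (second condition, ceiling $q \ge m$) applied to $x$. This uses nothing beyond the definition, precisely because the common ceiling $q$ dominates both $m$ and $n$.

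The main obstacle is the first condition: for $x \le y$ I must show $(x \vee m \vee n) \wedge y = x \vee \big((m \vee n) \wedge y\big)$, where only the inequality $\ge$ is automatic. The naive imitation of the computation above fails, because to strip $n$ off $(x \vee m \vee n) \wedge y$ via the first condition for $n$ I would need $x \vee m \le y$, and $x \vee m$ need not lie below $y$. My plan to get around this is to enlarge the ceiling: since $x \le y \le n \vee y$, the first condition for $m$ does apply with floor $x \vee n$ and ceiling $n \vee y$, giving
\[
(x \vee m \vee n) \wedge y = \big((x \vee m \vee n) \wedge (n \vee y)\big) \wedge y = \Big((x \vee n) \vee \big(m \wedge (n \vee y)\big)\Big) \wedge y .
\]
A short further manipulation (again using modularity of $m$, now to rewrite $n \vee (m \wedge (n \vee y)) = (m \vee n) \wedge (n \vee y)$) turns the right-hand side into $(x \vee k) \wedge y$ with $k := (m \vee n) \wedge (n \vee y)$, an element satisfying $k \wedge y = (m \vee n) \wedge y$. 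Thus everything reduces to the single absorption inequality $(x \vee k) \wedge y \le x \vee (k \wedge y)$.

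I expect this last step to be the crux. It is exactly an instance of the first modular condition for the element $k$, which I have no license to assume, and the identities available from the definition are easily seen to underdetermine $(x \vee m \vee n) \wedge y$, so I would resolve it structurally rather than by continued rewriting. One route is an $N_5$ argument: a strict inequality here would exhibit a pentagon sublattice placing $m \vee n$ in the modular-law-breaking position, and I would derive from it a pentagon violating the modularity of $m$ or of $n$. A second route is to use the interval isomorphisms $[m \wedge a, a] \cong [m, m \vee a]$ (and the analogue for $n$) that follow from the two modular conditions for $m$ and $n$, transporting the computation into a region where the generators do lie below the relevant ceiling. Symmetrising the reduction in $m$ and $n$ should also be available and may simplify the bookkeeping.
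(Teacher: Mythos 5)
Your handling of the second modular condition is complete and correct, and the reduction of the first condition to the single inequality $(x \vee k) \wedge y \le x \vee (k \wedge y)$, $k = (m \vee n) \wedge (n \vee y)$, is also valid. But the step you yourself call the crux is genuinely missing, and it is not a matter of bookkeeping: note that your entire argument for the first condition invokes only the \emph{first} modular condition of $m$ (and nothing about $n$), and the remaining inequality simply does not follow from the first conditions of $m$ and $n$ alone. Concretely, in the lattice $\Pi_4$ of partitions of $\{1,2,3,4\}$, the elements $m = 12|3|4$ and $n = 1|2|34$ satisfy the first condition for every pair $x \le y$, so your reduction applies to them verbatim; yet with $x = 13|2|4 \le y = 13|24$ (where $n \vee y = \hat{1}$, so $k = m \vee n = 12|34$) one gets $(x \vee k) \wedge y = y \not\le x = x \vee (k \wedge y)$. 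The lemma is not contradicted because these $m,n$ violate the \emph{second} condition (e.g.\ $(m \vee y) \wedge (m \vee n) = 12|34$ while $m \vee \bigl(y \wedge (m \vee n)\bigr) = m$). This shows two things: any correct completion must bring the second condition of $m$ or $n$ into the proof of the first condition for $m \vee n$, which your manipulations never do; and your first proposed route cannot work as described, since an $N_5$ with $m$ (or $n$) in the modular-law-breaking position witnesses a failure of the first condition only, and in the $\Pi_4$ configuration above no such pentagon exists even though the inequality for $m \vee n$ fails.

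Your second route (interval isomorphisms) is the right one, and is essentially how the standard argument goes --- the paper itself does not prove this lemma but cites Schmidt \cite[Theorem 2.1.6(d)]{Sch2} --- but you have not executed it, and executing it is exactly where the second condition enters. For the record: both $w := (x \vee m \vee n) \wedge y$ and $w' := x \vee \bigl((m \vee n) \wedge y\bigr)$ lie in $[n \wedge y, y]$; the map $z \mapsto z \vee n$ is injective on $[n \wedge y, y]$, since $(z \vee n) \wedge y = z \vee (n \wedge y) = z$ by the first condition for $n$; and the second condition for $n$ (with ceilings $n \vee y$ and $z \ge n$) gives $n \vee (z \wedge y) = z$ for all $z \in [n, n \vee y]$, from which one computes, also using the first condition for $m$, that $w \vee n$ and $w' \vee n$ both equal $x \vee n \vee \bigl(m \wedge (n \vee y)\bigr)$; injectivity then yields $w = w'$. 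Until this (or an equivalent argument) is written out, the proposal does not establish the lemma.
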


Most of our group theoretic notation follows that in \cite{Sch2}.  The center of a group $G$ will be denoted by $Z(G)$.  For a prime $p$, $O_p(G)$ is the largest normal $p$-subgroup of $G$.  For $H \leq G$, $H_G$ will denote the core of $H$ in $G$, that is, the intersection of all $G$-conjugates of $H$.  Also, $H^G$ will denote the normal closure of $H$ in $G$, that is, the subgroup generated by all $G$-conjugates of $H$, and $C_G(H)$ will denote the centralizer of $H$ in $G$.  Both $H_G$ and $H^G$ are normal in $G$.  A subgroup $M \leq G$ is {\it permutable} in $G$ if $HM=MH$ for all $H \leq G$.  A {\it $P$-group} (not to be confused with a $p$-group) is a group $H=CE$, not of prime order, such that 
\begin{itemize}
\item $E$ is a nontrivial normal elementary abelian $p$-subgroup of $H$ for some prime $p$, 
\item $C$ is cyclic, either trivial or of prime order $q \neq p$, and 
\item for each non-identity $c \in C$, there is some positive integer $n=n(c)$ with $2 \leq n \leq p-1$ such that $c^{-1}xc=x^n$ for all $x \in E$.
\end{itemize}
If $G$ is a finite group and $H,K$ are normal subgroups of $G$ with $K \leq H$, we say $H/K$ is {\it hypercentrally embedded} in $G$ if there exists a chain
\[
K=N_0 \leq N_1 \leq \ldots \leq N_r=H
\]
of subgroups of $G$ such that $[N_i,G] \leq N_{i-1}$ for all $i \in [r]$.  We say $H$ is hypercentrally embedded in $G$ if $H/1$ is hypercentrally embedded.  A {\it minimal normal subgroup} of $G$ is a nontrivial normal subgroup not containing properly any nontrivial normal subgroup.  Every minimal normal subgroup $N$ of $G$ is characteristically simple, that is, no nontrivial proper subgroup of $N$ is invariant under the action of $\operatorname{Aut}(N)$.

\section{The proof of Theorem \ref{nonsolv}} \label{nsproof}

There are two key facts from group theory used in the proof of Theorem \ref{nonsolv}.  We present these facts below as Lemmas \ref{autchs} and \ref{minnor}.  Lemma \ref{autchs} is weaker than \cite[Lemma 3.24]{BaLu} (see also \cite[6.6.3(c)]{KuSt}), the proof of which uses what is essentially Lemma \ref{minnor}.  The rest of the proof of Theorem \ref{nonsolv} is elementary.

\begin{lemma} \label{autchs}
Let $N$ be a nonabelian, characteristically simple finite group and let $A$ be a solvable group of automorphisms of $N$.  Then $A$ fixes (setwise) some nontrivial proper subgroup of $N$.
\end{lemma}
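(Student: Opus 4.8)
The plan is to reduce to a single simple factor and to separate the role of inner automorphisms from that of the permutation action on the factors. Write $N \cong T^k$ with $T$ a nonabelian simple group, so that $Z(N)=1$ and $\operatorname{Aut}(N) \cong \operatorname{Aut}(T) \wr S_k$; let $\pi\colon \operatorname{Aut}(N) \to S_k$ be the projection recording how an automorphism permutes the $k$ simple direct factors $T_1,\dots,T_k$. (We may assume $A \neq 1$, since otherwise any proper nontrivial subgroup of the nonabelian group $N$ works.) First I would exploit inner automorphisms. Since $Z(N)=1$, the map $t \mapsto \iota_t$ identifies $N$ with $\operatorname{Inn}(N) \trianglelefteq \operatorname{Aut}(N)$, and conjugation by $\alpha$ carries $\iota_t$ to $\iota_{\alpha(t)}$; hence $A$-invariant subgroups of $\operatorname{Inn}(N)$ correspond exactly to $A$-invariant subgroups of $N$. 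The subgroup $A \cap \operatorname{Inn}(N)$ is normal in $A$, so it corresponds to an $A$-invariant subgroup $S \leq N$. As $S \cong A \cap \operatorname{Inn}(N)$ is a subgroup of the solvable group $A$, it is solvable, whence $S \neq N$ because $N$ is nonsolvable. Thus if $A \cap \operatorname{Inn}(N) \neq 1$, then $S$ is a nontrivial proper $A$-invariant subgroup and we are done.

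This reduces the problem to the case $A \cap \operatorname{Inn}(N)=1$, that is, $A$ embeds in $\operatorname{Out}(N)$. Next I would dispose of the permutation action. If $\pi(A)$ is intransitive on $\{1,\dots,k\}$, choose an orbit $\emptyset \neq S \subsetneq \{1,\dots,k\}$; since each $\alpha \in A$ carries $T_i$ onto $T_{\pi(\alpha)(i)}$, the product $\prod_{i \in S} T_i$ is a nontrivial proper $A$-invariant subgroup, and again we are done. Hence we may assume $\pi(A)$ is transitive. In particular, for $k \geq 2$ the group $\pi(A)$ is a transitive solvable subgroup of $S_k$, while for $k=1$ the condition $A \cap \operatorname{Inn}(T)=1$ says that every nontrivial element of $A$ is an outer automorphism of the simple group $T$.

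The surviving case — $A$ acting faithfully by outer automorphisms and transitively permuting the factors — is the main obstacle. The natural candidate is a fixed-point subgroup: choosing a minimal normal subgroup $F \trianglelefteq A$, which is elementary abelian since $A$ is solvable, the centralizer $C_N(F)$ is $A$-invariant because $F \trianglelefteq A$, and it is proper because $F \neq 1$ acts faithfully and nontrivially on $N$. The difficulty is to force $C_N(F)$ to be nontrivial, which fails only if $F$ acts fixed-point-freely. Excluding this is exactly the deep point: a nonabelian characteristically simple group is non-nilpotent, so by Thompson's theorem no automorphism of prime order acts fixed-point-freely on $N$, giving $C_N(f) \neq 1$ for each nonidentity $f \in F$; but $C_N(F)=\bigcap_{f \in F} C_N(f)$ can still collapse when $F$ has rank greater than one, and controlling this for all simple types requires the classification-based structure theory behind \cite[Lemma 3.24]{BaLu} and \cite[6.6.3(c)]{KuSt}, of which the present lemma is a special case. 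I would therefore either invoke that input directly, or pass to the group $\operatorname{Inn}(N) \rtimes A$, in which transitivity of $\pi(A)$ makes $\operatorname{Inn}(N) \cong N$ a nonabelian minimal normal subgroup acted on by the solvable quotient $A$, and extract the desired invariant subgroup from the resulting configuration.
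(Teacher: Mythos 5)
The paper's own ``proof'' of this lemma is nothing more than the citation to \cite[Lemma 3.24]{BaLu} and \cite[6.6.3(c)]{KuSt}, and your proposal --- after correct preliminary reductions (the case $A \cap \operatorname{Inn}(N) \neq 1$ via the solvability of $S$, and the intransitive case via an orbit of factors) --- ultimately handles the crux by invoking exactly those same references, so it is essentially the same proof with some extra, strictly unnecessary, groundwork. Your second fallback (passing to $\operatorname{Inn}(N) \rtimes A$ and ``extracting the desired invariant subgroup from the resulting configuration'') is too vague to stand on its own, but since you also offer the direct citation, your argument is precisely as complete as the one in the paper.
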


\begin{proof}
See \cite[Lemma 3.24]{BaLu} or \cite[6.6.3(c)]{KuSt}
\end{proof}

\begin{lemma} \label{minnor}
Let $M$ be a maximal subgroup of the finite group $G$, and let $K$ be a nonabelian minimal normal subgroup of $G$ that is not contained in $M$.  Then $M \cap K$is not a nontrivial abelian $p$-group.
\end{lemma}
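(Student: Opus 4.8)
The plan is to argue by contradiction: suppose $L := M \cap K$ is a nontrivial abelian $p$-group and derive a contradiction. Since $K \trianglelefteq G$, every $m \in M$ satisfies $mLm^{-1} = (mMm^{-1}) \cap (mKm^{-1}) = M \cap K = L$, so $M \le N_G(L)$. As $M$ is maximal, $N_G(L)$ is either $M$ or $G$. If $N_G(L) = G$ then $L \trianglelefteq G$ with $1 \ne L \le K$, and minimality of $K$ forces $L = K$, contradicting the fact that $L$ is abelian while $K$ is nonabelian. Hence $N_G(L) = M$, and therefore $N_K(L) = N_G(L) \cap K = M \cap K = L$; that is, $L$ is self-normalizing in $K$. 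Since $L$ is abelian, $L \le C_K(L) \le N_K(L) = L$, so in fact $C_K(L) = L$ as well.

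Next I would exploit the structure of $K$. Being a nonabelian minimal normal subgroup, $K$ is characteristically simple and hence a direct product $K = T_1 \times \cdots \times T_k$ of isomorphic nonabelian simple groups $T_i \cong T$. Writing $\pi_i$ for the projection onto $T_i$ and $L_i := \pi_i(L)$, one computes $C_K(L) = C_{T_1}(L_1) \times \cdots \times C_{T_k}(L_k)$. Since each $L_i$ is abelian, $L \le L_1 \times \cdots \times L_k \le C_K(L) = L$, forcing $L = L_1 \times \cdots \times L_k$ and $L_i = C_{T_i}(L_i)$ for every $i$. Because $L$ is now a direct product, its normalizer also splits as $N_K(L) = N_{T_1}(L_1) \times \cdots \times N_{T_k}(L_k)$, and $N_K(L) = L$ then yields $N_{T_i}(L_i) = L_i$ for each $i$. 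Thus each $L_i$ is a nontrivial self-normalizing abelian $p$-subgroup of the nonabelian simple group $T_i$.

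The contradiction comes from Burnside's normal $p$-complement theorem. Fix any $i$ and a Sylow $p$-subgroup $P$ of $T_i$ containing $L_i$. In the $p$-group $P$ a proper subgroup has strictly larger normalizer, so $L_i < P$ would give $N_P(L_i) > L_i$, contradicting $N_{T_i}(L_i) = L_i$; hence $L_i = P$ is a Sylow $p$-subgroup of $T_i$. As $L_i$ is abelian and self-normalizing, $N_{T_i}(L_i) = C_{T_i}(L_i) = L_i$, so Burnside's theorem supplies a normal $p$-complement in $T_i$, that is, a proper nontrivial normal subgroup (proper because $p \mid |T_i|$, nontrivial because the nonabelian simple group $T_i$ is not a $p$-group). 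This contradicts the simplicity of $T_i$ and completes the argument.

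The maximality dichotomy and the projection bookkeeping are routine; the substantive ingredients are the reduction to a single simple factor and the invocation of Burnside's theorem. I expect the main obstacle to be the passage from the hypotheses on $L$ inside $K$ to the corresponding self-centralizing and self-normalizing conditions on each $L_i$ inside $T_i$, since it is precisely these conditions that make Burnside's criterion applicable. I do not anticipate needing Lemma \ref{autchs} for this particular lemma.
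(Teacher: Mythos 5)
Your proof is correct, but its second half takes a genuinely different route from the paper's. Both arguments assume $L = M \cap K$ is a nontrivial abelian $p$-group and both rest on the same two ingredients --- normalizer growth in $p$-groups and Burnside's normal $p$-complement theorem --- but these are deployed in opposite order and at different levels. The paper never decomposes $K$ into simple factors: it first shows $N_K(L) < K$ (since $O_p(K)=1$), then shows $L < N_K(L)$, ruling out the self-normalizing Sylow case because Burnside would produce a normal Hall $p'$-subgroup of $K$, which is characteristic in $K$, hence normal in $G$, contradicting minimal normality of $K$; only at the very end does it use maximality of $M$, observing that $N_K(L)$ is $M$-invariant so that $M < MN_K(L) < MK = G$. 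You spend maximality at the outset instead: $M \le N_G(L)$ forces $N_G(L) \in \{M,G\}$, the case $N_G(L)=G$ is impossible by minimality of $K$ and abelianness of $L$, so $N_K(L) = C_K(L) = L$; the contradiction then lives entirely inside $K$, via the decomposition $K = T_1 \times \cdots \times T_k$ into nonabelian simple groups, transfer of the self-normalizing and self-centralizing conditions to the projections $L_i$, and Burnside applied inside each $T_i$ against its simplicity. Your route isolates a clean, self-contained statement --- a direct product of nonabelian simple groups has no nontrivial self-normalizing abelian $p$-subgroup --- which needs nothing about the embedding $K \lhd G$, and it uses maximality of $M$ without the paper's $M$-invariance observation and order argument; the paper's route buys freedom from the direct-product decomposition and all the projection bookkeeping. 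One small point you should make explicit: each $L_i$ is automatically nontrivial, since $L_i = 1$ would give $N_{T_i}(L_i) = T_i \neq 1 = L_i$, contradicting $N_{T_i}(L_i) = L_i$.
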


\begin{proof}

Assume for contradiction that $M\cap K$ is a nontrivial abelian $p$-group.  We observe that $K$ is characteristically simple, and is not a $p$-group.  It follows that $O_p(K) = 1$, as otherwise it would be a nontrivial proper characteristic subgroup of $K$.  In particular we have $N_K(M \cap K) < K$.


If $M \cap K$ is a self-normalizing Sylow $p$-subgroup of $K$, then, by Burnside's Normal $p$-complement Theorem (see for example \cite[Lemma 7.2.1]{KuSt}), $K$ has a normal Hall $p^\prime$-subgroup $X$.  Now, as $1<M \cap K<K$, we see that $X$ is a nontrivial proper characteristic subgroup of $K$, contradicting the fact that $K$ is minimal normal in $G$.  Therefore, if $M \cap K$ is a Sylow $p$-subgroup of $K$ then $M \cap K<N_K(M \cap K)$.

Assume $M \cap K$ is not a Sylow $p$-subgroup of $K$. Let $P$ be a Sylow $p$-subgroup of $K$ such that $M \cap K<P$.  Then, as is well known, we have $M \cap K<N_P(M \cap K)$.

Combining the results we have obtained so far, we see that  
\[
M \cap K<N_K(M \cap K)<K.
\]
Since $M \cap K$ and $K$ are $M$-invariant, so is $N_K(M \cap K)$.  Now an easy order argument gives
\[
M<MN_K(M \cap K)<MK=G,
\]
contradicting the maximality of $M$.
\end{proof}


For a group $G$ and a normal subgroup $N$ of $G$, $\nl_G(N)$ will denote the largest number $t$ such that there exists a chain
\[
1=N_0 \lhd \ldots \lhd N_t=N
\]
of length $t$ consisting of normal subgroups of $G$.  Note that
\begin{equation} \label{chn}
\chl(G)=\chl(G/N)+\nl_G(N)
\end{equation}

\begin{lemma} \label{cldiff}
Let $M$ be a maximal subgroup of the finite group $G$.  Choose $N \lhd G$ such that $N/M_G$ is a minimal normal subgroup of $G/M_G$. Then
\begin{equation*} 
\chl(M)-\chl(G)=\nl_M(M_G)-\nl_G(M_G)+\nl_{M/M_G}((M \cap N)/M_G)-1.
\end{equation*}
\end{lemma}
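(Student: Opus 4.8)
The plan is to compute $\chl(M)$ and $\chl(G)$ separately by peeling off chief factors with the additivity formula \eqref{chn}, and then to match the two computations against each other through a single application of the second isomorphism theorem. The whole argument is driven by two structural observations, which I would establish first.

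\textbf{Structural setup.} Since $M_G$ is the core of $M$ and $N/M_G$ is a \emph{nontrivial} normal subgroup of $G/M_G$, we cannot have $N \le M$: otherwise $N$ would be a normal subgroup of $G$ contained in $M$, hence $N \le M_G$, forcing $N/M_G$ to be trivial. As $M$ is maximal in $G$, the fact that $N \not\le M$ gives $MN = G$. Moreover $N \lhd G$ yields $M \cap N \lhd M$, and since $M_G \le M$ and $M_G \le N$ we have $M_G \le M \cap N$, so the quotients appearing below are all well defined.

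\textbf{The two computations.} Applying \eqref{chn} to $G$ with the normal subgroup $M_G$, and then again inside $G/M_G$ with the normal subgroup $N/M_G$, I obtain
\[
\chl(G) = \chl(G/N) + \nl_{G/M_G}(N/M_G) + \nl_G(M_G).
\]
The crucial point is that $\nl_{G/M_G}(N/M_G) = 1$: a minimal normal subgroup of $G/M_G$ contains no proper nontrivial $G/M_G$-normal subgroup, so the only chain of $G/M_G$-normal subgroups running from the trivial group up to $N/M_G$ is $1 \lhd N/M_G$. In the same way, applying \eqref{chn} to $M$ with $M_G$, and then inside $M/M_G$ with $(M \cap N)/M_G$, gives
\[
\chl(M) = \chl(M/(M \cap N)) + \nl_{M/M_G}((M \cap N)/M_G) + \nl_M(M_G),
\]
where I use the isomorphism $(M/M_G)/((M \cap N)/M_G) \cong M/(M \cap N)$.

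\textbf{Conclusion.} The two computations are tied together by the second isomorphism theorem: because $MN = G$, we have $M/(M \cap N) \cong MN/N = G/N$, whence $\chl(M/(M \cap N)) = \chl(G/N)$. Subtracting the expression for $\chl(G)$ from that for $\chl(M)$, the common term $\chl(G/N)$ cancels, and substituting $\nl_{G/M_G}(N/M_G) = 1$ produces exactly the claimed identity. The only genuinely non-formal inputs are the evaluation $\nl_{G/M_G}(N/M_G) = 1$ coming from minimal normality and the implication $N \not\le M \Rightarrow MN = G$ coming from the core hypothesis and maximality; I expect the main care to be bookkeeping — checking that the intersections and quotients interact correctly with \eqref{chn} — rather than anything deep.
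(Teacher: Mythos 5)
Your proof is correct and follows essentially the same route as the paper's: repeated application of the additivity formula \eqref{chn} (peeling off $M_G$, then $N/M_G$ and $(M\cap N)/M_G$), the evaluation $\nl_{G/M_G}(N/M_G)=1$ from minimal normality, and the identification $G/N \cong M/(M\cap N)$ via $G=MN$. The only difference is organizational — you compute $\chl(G)$ and $\chl(M)$ separately and subtract, while the paper manipulates the difference in one chain of equalities — and you make explicit the step $N \not\le M \Rightarrow MN=G$, which the paper leaves implicit.
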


\begin{proof}
Since $G=MN$, we have $G/N \cong M/(M \cap N)$.  Now
\\
\\
$\chl(M)-\chl(G)$
\begin{eqnarray*}
& = & \nl_M(M_G)-\nl_G(M_G)+\chl(M/M_G)-\chl(G/M_G) \\ & = & \nl_M(M_G)-\nl_G(M_G)+\chl(M/M_G)-(1+\chl(G/N)) \\ & = & \nl_M(M_G)-\nl_G(M_G)+\chl(M/M_G)-\chl(M/(M \cap N))-1 \\ & = & \nl_M(M_G)-\nl_G(M_G)+\nl_{M/M_G}((M \cap N)/M_G)-1
\end{eqnarray*}
\end{proof}

Note that whenever $H \leq G$, we have
\[
\nl_G(H_G) \leq \nl_H(H_G).
\]
The next result, which has significant overlap with \cite[Proposition 2.3]{HaSo} follows immediately.

\begin{cor} \label{cdcor}
Let $G,M,N$ be as in Lemma \ref{cldiff}.
\begin{enumerate}
\item We have $\chl(M) \geq \chl(G)-1$.
\item If $M \cap N \neq M_G$, then $\chl(M) \geq \chl(G)$.
\item If $(M \cap N)/M_G$ is neither trivial nor a minimal normal subgroup of $M/M_G$, then $\chl(M) \geq \chl(G)+1$.
\end{enumerate}
\end{cor}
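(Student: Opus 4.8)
The plan is to derive all three statements as direct consequences of Lemma~\ref{cldiff} together with the elementary inequality $\nl_G(H_G) \leq \nl_H(H_G)$ noted just before the corollary. The key identity to keep in front of us is
\[
\chl(M)-\chl(G)=\bigl(\nl_M(M_G)-\nl_G(M_G)\bigr)+\nl_{M/M_G}((M \cap N)/M_G)-1.
\]
Applying the inequality with $H=M$ and recalling that $M_G$ is the core of $M$ in $G$, the first parenthesized difference is nonnegative. So the entire problem reduces to understanding the single term $\nl_{M/M_G}((M \cap N)/M_G)$, which counts the length of the longest chain of $M/M_G$-normal subgroups running from the trivial group up to $(M \cap N)/M_G$.

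For part~(1), I would observe that $\nl_{M/M_G}((M \cap N)/M_G) \geq 0$ always, since the empty chain is allowed. Dropping that nonnegative term and the nonnegative first difference from the identity leaves $\chl(M)-\chl(G) \geq -1$, which is exactly $\chl(M) \geq \chl(G)-1$. For part~(2), the hypothesis $M \cap N \neq M_G$ says precisely that $(M \cap N)/M_G$ is a nontrivial subgroup, so any chain from the trivial subgroup up to it has length at least $1$; hence $\nl_{M/M_G}((M \cap N)/M_G) \geq 1$, and the identity gives $\chl(M)-\chl(G) \geq 0$. For part~(3), the hypothesis that $(M \cap N)/M_G$ is neither trivial nor a minimal normal subgroup of $M/M_G$ means we can find a proper nontrivial $M/M_G$-normal subgroup strictly inside it; interposing such a subgroup produces a chain of length at least $2$ from the trivial group to $(M \cap N)/M_G$, so $\nl_{M/M_G}((M \cap N)/M_G) \geq 2$, yielding $\chl(M)-\chl(G) \geq 1$.

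The only point that requires a small argument rather than a one-line reading of the identity is the lower bound on $\nl_{M/M_G}$ in each case, and this is where I expect whatever mild friction there is to sit. The cleanest framing is to note that $\nl_{M/M_G}(\,\cdot\,)$ is a length function on the lattice of normal subgroups of $M/M_G$ contained in $(M \cap N)/M_G$, so it is $0$ iff the subgroup is trivial, $\geq 1$ iff the subgroup is nontrivial, and $\geq 2$ iff the subgroup strictly contains some nontrivial proper $M/M_G$-normal subgroup — equivalently, iff it is nontrivial and not minimal normal in $M/M_G$. The hypotheses of (2) and (3) are engineered to trigger exactly these bounds, so each part follows by substituting the appropriate bound into the Lemma~\ref{cldiff} identity. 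No hard obstacle arises; the corollary is genuinely a bookkeeping consequence, and the main care is simply to confirm that $M \cap N \geq M_G$ (so that $(M \cap N)/M_G$ is a well-defined subgroup of $M/M_G$), which holds because $M_G \leq M$ and $M_G \leq N$ by the choice of $N$ containing $M_G$.
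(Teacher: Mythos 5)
Your proposal is correct and takes exactly the route the paper intends: the paper gives no separate proof, stating only that the corollary ``follows immediately'' from Lemma \ref{cldiff} together with the inequality $\nl_G(M_G) \leq \nl_M(M_G)$, and your argument is precisely that bookkeeping, with the three lower bounds $\nl_{M/M_G}((M \cap N)/M_G) \geq 0, 1, 2$ correctly matched to the three hypotheses. Your added check that $M_G \leq M \cap N$ (so the quotient is well defined) is a worthwhile detail the paper leaves implicit.
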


The next corollary of Lemma \ref{cldiff} also follows from the fact, mentioned in the introduction, that every maximal chain in a lattice $\cl$ is at least as long as every chain of modular elements in $\cl$.

\begin{cor} \label{mmlgechl}
Let $G$ be any finite group.  Then
\[
\mml(G) \geq \chl(G).
\]
\end{cor}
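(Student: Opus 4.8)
The plan is to derive this from Lemma \ref{cldiff} by inducting on $|G|$, peeling off the top covering step of a shortest maximal chain and invoking part (1) of Corollary \ref{cdcor}. The base case is the trivial group, where $\mml(G)=0=\chl(G)$. The idea throughout is that a minimal-length maximal chain is built from a maximal covering $M \lessdot G$ together with a maximal chain of $\cl(M)$, so the bound propagates upward through the maximal-subgroup structure.

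For the inductive step, I would choose a maximal chain $\cx$ in $\cl(G)$ of length exactly $\mml(G)$. Every maximal chain contains $1$ and $G$ and ends with a covering $M \lessdot G$ for some maximal subgroup $M$ of $G$; the portion of $\cx$ lying at or below $M$ is then a maximal chain of $\cl(M)$, and it has length $\mml(G)-1$. Consequently $\mml(M) \leq \mml(G)-1$. Since $|M|<|G|$, the inductive hypothesis applies to $M$ and yields $\mml(M) \geq \chl(M)$, while Corollary \ref{cdcor}(1) gives $\chl(M) \geq \chl(G)-1$ for this (indeed any) maximal subgroup. Chaining these inequalities,
\[
\mml(G)-1 \geq \mml(M) \geq \chl(M) \geq \chl(G)-1,
\]
so $\mml(G) \geq \chl(G)$, completing the induction.

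There is no serious obstacle here: the substance has already been carried out in Lemma \ref{cldiff} and extracted in Corollary \ref{cdcor}(1), so the corollary reduces to a one-line induction. The only point worth checking with care is the reduction step, namely that truncating a shortest maximal chain of $\cl(G)$ at its second-highest element produces a \emph{maximal} chain of $\cl(M)$ of length $\mml(G)-1$; this is immediate from the definition, since no element can be inserted between consecutive members of a maximal chain. As noted in the text, an alternative derivation is available through the fact that every maximal chain in $\cl(G)$ is at least as long as any chain of modular elements (applied to a chief series, whose members are normal and hence modular), but the inductive argument above keeps the proof self-contained within the present section.
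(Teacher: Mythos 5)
Your proof is correct and matches the paper's own argument: induction on $|G|$, truncating a maximal chain at its second-highest element $M$ to get a maximal chain of $\cl(M)$, then combining the inductive hypothesis with Corollary \ref{cdcor}(1). The only cosmetic difference is that the paper applies the argument to an arbitrary maximal chain rather than one of minimal length, which changes nothing of substance.
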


\begin{proof}
We proceed by induction on $|G|$, the case $G=1$ being trivial.  Assume now that $|G|>1$ and let
\[
1=M_0<\ldots<M_r=G
\]
be a maximal chain in $\cl(G)$.  Then
\[
r \geq 1+\mml(M_{r-1}) \geq 1+\chl(M_{r-1}) \geq \chl(G),
\]
the second inequality following from the inductive hypothesis and the third from Corollary \ref{cdcor}(1).
\end{proof}

\begin{prop} \label{chlesns}
Let $G$ be a nonsolvable finite group and let $M$ be a solvable maximal subgroup of $G$.  Then $\chl(M)>\chl(G)$.
\end{prop}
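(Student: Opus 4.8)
The plan is to apply Corollary \ref{cdcor}(3) after passing to the quotient $\ov G := G/M_G$. Write $\ov M := M/M_G$ and, choosing $N$ as in Lemma \ref{cldiff}, set $\ov N := N/M_G$, so that $\ov N$ is a minimal normal subgroup of $\ov G$ and $(M \cap N)/M_G = \ov M \cap \ov N$. By the correspondence theorem $\ov M$ is a maximal subgroup of $\ov G$ whose core is trivial, and since $M_G$ is solvable (being a subgroup of the solvable group $M$) while $G$ is not, the quotient $\ov G$ is nonsolvable. My goal is to show that $\ov M \cap \ov N$ is neither trivial nor a minimal normal subgroup of $\ov M$; Corollary \ref{cdcor}(3) then yields $\chl(M) \geq \chl(G)+1$, which is exactly what we want.

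First I would record some structural facts. Since the core of $\ov M$ is trivial and $\ov N$ is a nontrivial normal subgroup of $\ov G$, we have $\ov N \not\leq \ov M$, and maximality of $\ov M$ forces $\ov G = \ov M \ov N$. Because $\ov N \lhd \ov G$, the intersection $\ov M \cap \ov N$ is normal in $\ov M$. Next I claim $\ov N$ must be nonabelian: otherwise $\ov M \cap \ov N$, being normalized by $\ov M$ and by the abelian group $\ov N$, would be normal in $\ov G = \ov M \ov N$, so minimality of $\ov N$ together with $\ov N \not\leq \ov M$ would give $\ov M \cap \ov N = 1$ and hence $\ov G = \ov N \rtimes \ov M$; but then $\ov G$ would be an extension of the abelian group $\ov N$ by the solvable group $\ov M$, contradicting nonsolvability of $\ov G$. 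Thus $\ov N$ is a nonabelian characteristically simple group.

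The crux is to show $\ov M \cap \ov N \neq 1$, and this is where Lemma \ref{autchs} enters; I expect this to be the main obstacle. The group of automorphisms of $\ov N$ induced by conjugation by $\ov M$ is a quotient of the solvable group $\ov M$, hence solvable, so Lemma \ref{autchs} provides a nontrivial proper subgroup $\ov U < \ov N$ that is normalized by $\ov M$. Then $N_{\ov G}(\ov U) \geq \ov M$. If $\ov U$ were normal in $\ov G$ it would be a nontrivial proper normal subgroup of $\ov G$ contained in $\ov N$, contradicting minimality of $\ov N$; hence $N_{\ov G}(\ov U) \neq \ov G$, and maximality of $\ov M$ gives $N_{\ov G}(\ov U) = \ov M$. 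In particular $\ov U \leq \ov M$, so $\ov U \leq \ov M \cap \ov N$, and therefore $\ov M \cap \ov N \neq 1$.

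Finally I would rule out the possibility that $\ov M \cap \ov N$ is minimal normal in $\ov M$. Since $\ov M$ is solvable, every minimal normal subgroup of $\ov M$ is an elementary abelian $p$-group, that is, a nontrivial abelian $p$-group. But Lemma \ref{minnor}, applied to the maximal subgroup $\ov M$ of $\ov G$ and the nonabelian minimal normal subgroup $\ov N \not\leq \ov M$, says precisely that $\ov M \cap \ov N$ is \emph{not} a nontrivial abelian $p$-group. Hence $\ov M \cap \ov N$ is a nontrivial normal subgroup of $\ov M$ that fails to be minimal normal, and Corollary \ref{cdcor}(3) completes the argument.
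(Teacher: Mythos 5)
Your proof is correct and takes essentially the same route as the paper: choose $N$ as in Lemma \ref{cldiff}, verify the hypothesis of Corollary \ref{cdcor}(3) by using Lemma \ref{autchs} (with the normalizer/maximality argument you spell out) to get $\ov{M} \cap \ov{N} \neq 1$, and Lemma \ref{minnor} to rule out minimal normality in the solvable group $\ov{M}$. The only cosmetic differences are that the paper says ``assume WLOG $M_G = 1$'' where you work in the quotient explicitly, and that the paper deduces $N$ is nonabelian from its nonsolvability (since $G/N \cong M/(M \cap N)$ is solvable) rather than via your semidirect-product contradiction.
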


\begin{proof}
Let $N$ be as in Lemma \ref{cldiff}.  We will apply Corollary \ref{cdcor}(3).  We assume without loss of generality that $M_G=1$.  Since $M$ is solvable and $G$ is not, we see that $N$ is not solvable.  Thus $N$ is a nonabelian characteristically simple group.  By Lemma \ref{autchs} and the maximality of $M$, $M \cap N \neq 1$.  Since $M$ is solvable, we know that every minimal normal subgroup of $M$ is an elementary abelian $p$-group.  By Lemma \ref{minnor}, $M \cap N$ is not a minimal normal subgroup of $M$.  Thus Corollary \ref{cdcor}(3) applies.
\end{proof}

\begin{proof}[Proof of Theorem \ref{nonsolv}.]  Assume for contradiction that $G$ is a counterexample to the claim of Theorem \ref{nonsolv} with $|G|$ minimal.  Let
\[
1=M_0<\ldots<M_r=G
\]
be a maximal chain in $\cl(G)$ with $r<\chl(G)+2$.  If $M_{r-1}$ is not solvable then we obtain the contradiction
\[
r-1 \geq \chl(M_{r-1})+2 \geq \chl(G)+1,
\]
the first inequality following from the minimality of $|G|$ and the second from Corollary \ref{cdcor}(1).  If $M_{r-1}$ is solvable then we obtain the contradiction
\[
r-1 \geq \chl(M_{r-1}) \geq \chl(G)+1,
\]
the first inequality following from Corollary \ref{mmlgechl} and the second from Corollary \ref{chlesns}.
\end{proof}

\section{Proof of Theorem \ref{mod}} \label{modproof}

Our proof of Theorem \ref{mod} relies on four results from \cite{Sch2}.  We list these results below, making only minor notational changes from the statements given in \cite{Sch2}.

\begin{lemma}[Lemma 5.1.9 from \cite{Sch2}] \label{519}
If $M$ is a subgroup of prime power order of the finite group $G$, then the following properties are equivalent.
\begin{itemize}
\item[(a)] $M$ is modular in $\cl(G)$.
\item[(b)] $M$ is modular in $\cl(\langle M,x \rangle)$ for all $x \in G$ of prime power order.
\item[(c)] $M$ is permutable in $G$ or $G/M_G=M^G/M_G \times K/M_G$, where $M^G/M_G$ is a nonabelian $P$-group of order prime to $|K/M_G|$.
\end{itemize}
\end{lemma}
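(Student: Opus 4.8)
The plan is to establish the cyclic chain of implications (a) $\Rightarrow$ (b) $\Rightarrow$ (c) $\Rightarrow$ (a). The implication (a) $\Rightarrow$ (b) requires no real work and I would dispose of it first, using the observation recorded in Section~\ref{defsec}: for each $x \in G$ of prime power order, the subgroup lattice $\cl(\langle M,x\rangle)$ is precisely the interval $[1,\langle M,x\rangle]$ of $\cl(G)$, and a modular element of a lattice stays modular in every interval that contains it. Since $M \leq \langle M,x\rangle$, modularity of $M$ in $\cl(G)$ transfers immediately to modularity in each $\cl(\langle M,x\rangle)$.

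For (c) $\Rightarrow$ (a) I would split into the two cases offered by the hypothesis. If $M$ is permutable in $G$, then $M$ is modular in $\cl(G)$; this is the classical fact that permutable subgroups are modular, verifiable from Dedekind's law together with the identity $MH=HM$. In the remaining case we are handed the decomposition $G/M_G = M^G/M_G \times K/M_G$ with $M^G/M_G$ a nonabelian $P$-group of order coprime to $|K/M_G|$. Here I would first reduce modulo the core: since $M_G \leq M$ is normal in $G$, modularity of $M$ in $\cl(G)$ is equivalent to modularity of $M/M_G$ in $\cl(G/M_G)$, a standard core reduction in this theory. Now $M/M_G$ lies inside the direct factor $M^G/M_G$, whose \emph{entire} subgroup lattice is modular by Iwasawa's description of $P$-groups, so $M/M_G$ is modular there. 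Finally, because this factor has order coprime to the complementary factor, every subgroup of $G/M_G$ splits as a product of a subgroup of $M^G/M_G$ with one of $K/M_G$, and a direct computation then shows that modularity in the factor lifts to modularity in $G/M_G$.

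The substantive direction, and the main obstacle, is (b) $\Rightarrow$ (c). The plan is to examine, for each prime power element $x$, the two-generated group $\langle M,x\rangle$ in which $M$ is by hypothesis a modular $p$-subgroup, and to read off a local dichotomy. When $x$ is a $p$-element, $\langle M,x\rangle$ is a $p$-group; since a nonabelian $P$-group involves two distinct primes, no $P$-group obstruction can arise within a single prime. When $x$ has order a power of a prime $q \neq p$, Iwasawa's classification of modular subgroups of prime power order yields the dichotomy: either $M$ permutes with $\langle x\rangle$, or $\langle M,x\rangle$ modulo its core is a nonabelian $P$-group realizing the power action in the definition. The hard part is the globalization, namely assembling these local verdicts into the single alternative stated in (c) for $G$ itself: if $M$ permutes with every cyclic subgroup of prime power order, one concludes that $M$ is permutable; otherwise the $P$-group behavior must be shown compatible across all relevant $x$, the coprimality extracted, and the normal closure $M^G$ shown to split off as the asserted direct factor modulo $M_G$. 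This step genuinely requires the structure theory of $P$-groups and the interplay between permutability and the normal closure developed in the theory of modular subgroup lattices, and it is for exactly this reason that we quote the result from \cite{Sch2} rather than reprove it here.
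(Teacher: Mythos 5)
The paper offers no proof of this lemma to compare against: it is one of four results quoted verbatim from Schmidt's book (\cite[Lemma 5.1.9]{Sch2}) and imported as a black box for the proof of Theorem \ref{mod}. So your attempt can only be judged as a standalone proof, and as such it is incomplete. The two implications you actually argue are correct: (a) $\Rightarrow$ (b) is exactly the interval observation recorded in Section \ref{defsec}, applied to the interval $[1,\langle M,x\rangle]$ of $\cl(G)$; and your (c) $\Rightarrow$ (a) soundly assembles four standard facts --- permutable subgroups are modular (Ore's theorem, via Dedekind's law), modularity of $M$ in $\cl(G)$ is equivalent to modularity of $M/M_G$ in $\cl(G/M_G)$ when the core is contained in $M$ (Schmidt's core-reduction lemma), $P$-groups have modular subgroup lattices (Iwasawa), and $\cl(A\times B)\cong\cl(A)\times\cl(B)$ when $\gcd(|A|,|B|)=1$. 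Note, however, that each of these ingredients is itself a quoted result from the same body of theory you are trying to avoid citing.

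The genuine gap is (b) $\Rightarrow$ (c), which is the entire substance of the lemma. What you give there is a plan, not a proof: you describe the local dichotomy for each $\langle M,x\rangle$ and then concede that the globalization ``genuinely requires the structure theory of $P$-groups'' and will be quoted from \cite{Sch2}. No candidate complement $K$ is constructed, the coprimality of $|M^G/M_G|$ and $|K/M_G|$ is never extracted, and nothing shows that $M^G/M_G$ splits off as a direct factor of $G/M_G$ --- these are precisely the hard points, and they are the reason the lemma is nontrivial. If the substantive direction is to be cited from \cite{Sch2} anyway, then logically nothing is gained by reproving the routine directions, and one may as well quote the whole equivalence, which is exactly what the paper does. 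So your proposal is not in conflict with the paper --- both treatments ultimately rest on Schmidt --- but it should not be mistaken for a proof of the lemma.
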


\begin{lemma}[Lemma 5.1.12 from \cite{Sch2}] \label{512}
Let $G$ be a finite group and $M \leq G$ such that $M$ is modular in $\cl(\langle M,x \rangle)$ for every $x \in G$ of prime power order.  If $Q/M_G$ is a Sylow subgroup of $M/M_G$, then $Q$ is modular in $\cl(G)$.
\end{lemma}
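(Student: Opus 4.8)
The plan is to reduce to the case where the subgroup in question has prime power order, and then to invoke the local characterization of modularity furnished by Lemma \ref{519}. Write $p$ for the prime with $Q/M_G$ a Sylow $p$-subgroup of $M/M_G$. First I would pass to $\overline{G}:=G/M_G$. Since $M_G \lhd G$ is contained in $Q$, a standard application of Dedekind's modular law shows that $Q$ is modular in $\cl(G)$ if and only if $\overline{Q}:=Q/M_G$ is modular in $\cl(\overline{G})$. I would also check that the hypothesis descends: every element of $\overline{G}$ of prime power order is the image of a prime-power-order element of $G$ (take the $p$-part of any lift), and restricting the modularity of $M$ in $\cl(\langle M,x\rangle)$ to the interval $[M_G,\langle M,x\rangle]$ (which is isomorphic to $\cl(\langle \overline{M},\overline{x}\rangle)$) shows that $\overline{M}=M/M_G$ is modular in $\cl(\langle \overline{M},\overline{x}\rangle)$ for every $\overline{x}\in\overline{G}$ of prime power order. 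As $(M/M_G)_{\overline{G}}=1$, this reduces us to the case $M_G=1$, where $Q$ is simply a Sylow $p$-subgroup of $M$ and in particular has prime power order.

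Now $Q$ has prime power order, so I would apply Lemma \ref{519} to $Q$: it suffices to verify condition (b), that $Q$ is modular in $\cl(\langle Q,x\rangle)$ for every $x\in G$ of prime power order. Fix such an $x$ and set $H:=\langle M,x\rangle$, so that $M$ is modular in $\cl(H)$ by hypothesis and $\langle Q,x\rangle\leq H$. Since modularity in a lattice restricts to modularity in any interval containing the element (as noted in Section \ref{defsec}), and $[1,\langle Q,x\rangle]=\cl(\langle Q,x\rangle)$ is such an interval in $\cl(H)$, it is enough to prove that $Q$ is modular in $\cl(H)$; restriction then yields exactly condition (b). In this way the global statement is reduced to the purely local claim that a Sylow $p$-subgroup $Q$ of a modular subgroup $M$ of $H$ is again modular in $\cl(H)$, where $H$ is generated by $M$ together with a single prime-power-order element.

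The main obstacle is precisely this local claim, namely transferring modularity from $M$ to its Sylow subgroup $Q$. Lemma \ref{519} does not apply directly to $M$, since $M$ need not have prime power order, so I cannot merely quote its structural conclusion (c). Instead I would pass to $H/M_H$ to make the modular subgroup core-free and then appeal to the structure theory of modular subgroups, which describes such a subgroup (as in the model provided by Lemma \ref{519}(c)) as assembled from a permutable part together with nonabelian $P$-group direct factors. In the permutable situation one extracts modularity of the Sylow $p$-subgroup from the structure theory of permutable subgroups; in each $P$-group factor the relevant $p$-part is the normal elementary abelian subgroup $E$, hence modular, so its contribution can be isolated prime-by-prime. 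Finally I would recombine the modular pieces supported at the prime $p$ into the single subgroup $Q$ using Lemma \ref{modjoin}. Making the permutability step precise and controlling the interaction between the permutable part and the $P$-group factors at the prime $p$ is where the genuine work lies.
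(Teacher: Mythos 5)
First, a caveat: the paper does not actually prove this lemma---it is quoted directly from Schmidt's book \cite{Sch2}---so there is no in-paper argument to compare against, and I am judging your proposal on its own merits. Your first reduction is sound: the Dedekind-law argument showing that, for $M_G \leq Q$, modularity of $Q$ in $\cl(G)$ is equivalent to modularity of $Q/M_G$ in $\cl(G/M_G)$ does work, prime-power elements of $G/M_G$ do lift to prime-power elements of $G$, and the hypothesis on $M$ descends to the quotient. Applying Lemma \ref{519} to $Q$ and aiming at its condition (b) is also the right shape of argument. The genuine gap is the step where you assert that "the global statement is reduced to the purely local claim that a Sylow $p$-subgroup $Q$ of a modular subgroup $M$ of $H$ is again modular in $\cl(H)$, where $H = \langle M, x\rangle$." That local claim is false. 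Take $H = S_4 = \langle A_4, (12)\rangle$, $M = A_4$ (normal, hence modular in $\cl(H)$), and $Q = \langle (123)\rangle$, a Sylow $3$-subgroup of $M$. Then $Q$ is not modular in $\cl(H)$: for $A = \langle (12)(34)\rangle \leq B = \langle (12)(34),(13)(24)\rangle$ we have $Q \wedge B = 1$, so $A \vee (Q \wedge B) = A$ has order $2$, whereas $A \vee Q = A_4$ (an element of order $2$ and one of order $3$ in $A_4$ generate $A_4$, since $A_4$ has no subgroup of order $6$), so $(A \vee Q)\wedge B = B$ has order $4$. The same computation shows $Q$ is not even modular in $\cl(A_4)$. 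So the statement you have reduced to cannot be proved by any argument, structure-theoretic or otherwise.

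The reason the reduction loses soundness is the core. In Lemma \ref{512} the subgroup $Q$ contains $M_G$, and what is Sylow is $Q/M_G$ in $M/M_G$; this is not cosmetic. Your first reduction makes the core of $M$ in $G$ trivial, but when you replace $G$ by $H = \langle M, x\rangle$, the core $M_H$ can be large (in the example above $M_H = M$), and the only datum you retain---modularity of $M$ in $\cl(H)$---knows nothing about core-freeness in the larger group. Indeed, Lemma \ref{512} applied to the group $H$ only asserts modularity of subgroups containing $M_H$, i.e.\ it gives modularity of $QM_H$ in $\cl(H)$, never of $Q$ itself. The same defect sinks your proposed repair of "passing to $H/M_H$": in that quotient $Q$ is replaced by its image $QM_H/M_H$, and transporting modularity back along the correspondence again yields only $QM_H$. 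Any correct proof must exploit the full global hypothesis---that $M$ is modular in $\cl(\langle M, y\rangle)$ for prime-power $y$ ranging over all of $G$, together with $M_G = 1$ in $G$---which is exactly what rules out configurations such as a core-free locally modular $M \cong A_4$ (impossible, since its Sylow $3$-subgroups fail to be modular even in $\cl(A_4)$). Your "purely local" formulation discards precisely this information, so the hard part of the lemma remains untouched.
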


\begin{lemma}[Lemma 5.2.2 from \cite{Sch2}] \label{522}
If $p$ is a prime and $N$ a normal $p$-subgroup of the finite group $G$, then $N$ is hypercentrally embedded in $G$ if and only if $G/C_G(N)$ is a $p$-group.
\end{lemma}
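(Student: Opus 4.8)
The plan is to prove both implications by studying the faithful action of $G$ on $N$, whose image is the subgroup $\ov{G}:=G/C_G(N) \le \operatorname{Aut}(N)$, and to exploit repeatedly that $N$ is a $p$-group. The statement ``$N$ is hypercentrally embedded'' unwinds to the assertion that $\ov{G}$ stabilizes a $G$-invariant chain $1=N_0 \le \cdots \le N_r=N$ acting trivially on each factor $N_i/N_{i-1}$ (this is exactly $[N_i,G] \le N_{i-1}$, once one checks a lift $g \in G$ of an automorphism induces the same commutator in $N$). So the content of the lemma is the equivalence: such a stabilized chain exists if and only if $\ov{G}$ is a $p$-group.

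For the direction assuming $\ov{G}$ is a $p$-group, I would form the external semidirect product $\Gamma:=N \rtimes \ov{G}$. Since $N$ and $\ov{G}$ are both $p$-groups and the action is faithful, $\Gamma$ is a finite $p$-group, hence nilpotent. I then set $N_i:=N \cap Z_i(\Gamma)$, where $Z_i(\Gamma)$ denotes the upper central series of $\Gamma$. As $Z_i(\Gamma)$ is characteristic in $\Gamma$ and $N \lhd \Gamma$, each $N_i$ is normalized by $N$ and by $\ov{G}$, hence is normal in $G$; nilpotency of $\Gamma$ gives $N_r=N$ for $r$ large, while $N_0=N \cap 1=1$. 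The key point is that for $g \in G$ and $n \in N_i$ the commutator $[n,g]$ computed in $G$ agrees with $[n,\ov{g}]$ computed in $\Gamma$, and the latter lies in $[Z_i(\Gamma),\Gamma] \cap N \le Z_{i-1}(\Gamma) \cap N=N_{i-1}$. Thus $[N_i,G] \le N_{i-1}$, exhibiting $N$ as hypercentrally embedded.

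For the converse, assume $N$ is hypercentrally embedded, witnessed by a chain as above. Then $\ov{G}$ lies in the stability group of that chain. By Cauchy's theorem it suffices to show $\ov{G}$ contains no nontrivial element of order prime to $p$, so let $\beta \in \ov{G}$ be such a $p'$-element. Since $N$ is a $p$-group, $\beta$ acts coprimely on $N$, and I would invoke the standard coprime-action identity $[N,\beta]=[N,\beta,\beta]$ (see \cite{KuSt}). Triviality of $\beta$ on the factors gives $[N_i,\beta] \le N_{i-1}$ for all $i$, and in particular $[N,\beta] \le N_{r-1}$. Writing $W=[N,\beta]$ and using $W=[W,\beta]$, the bound $W \le N_{i}$ upgrades to $W=[W,\beta] \le [N_i,\beta] \le N_{i-1}$; iterating drives $W$ down to $N_0=1$. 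Hence $\beta$ centralizes $N$ and is trivial in $\ov{G}$, so $\ov{G}$ is a $p$-group.

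The verifications that each $N_i$ is $G$-normal and that the chain closes up at $N$ are routine, as is the commutator bookkeeping that transfers a $\Gamma$-central chain into a $G$-central one. The main obstacle is the forward direction: one must show the stability group of an invariant chain in a $p$-group contains no nontrivial $p'$-automorphism, the crux being that triviality on the factors of the chain propagates all the way down under repeated commutation. The coprime identity $[N,\beta]=[N,\beta,\beta]$ is what makes this descent go through; an alternative route would combine P.~Hall's theorem that the stability group of a chain is nilpotent with the fact that a coprime automorphism trivial on $N/\Phi(N)$ is trivial, but the coprime identity yields the cleanest argument.
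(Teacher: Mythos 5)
Your proof is correct, but note that the paper itself gives no proof of this lemma: it is quoted directly from Schmidt's book \cite[Lemma 5.2.2]{Sch2} and used as a black box in the proof of Theorem \ref{mod}, so there is no internal argument to compare against. Judged on its own, your argument is sound. In the forward direction, $\Gamma = N \rtimes \overline{G}$ is a $p$-group (faithfulness of the action is irrelevant for this; only $|\Gamma| = |N|\,|\overline{G}|$ matters), and the key identification $[n,g]_G = [n,\overline{g}]_{\Gamma} = n^{-1}n^{g} \in N$ is valid because $\overline{g}$ acts on $N$ exactly as $g$ does by conjugation; hence $N_i = N \cap Z_i(\Gamma)$ satisfies $[N_i,G] \le N_{i-1}$, and the chain reaches $N$ by nilpotency of $\Gamma$. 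In the converse direction, the coprime identity $[N,\beta,\beta] = [N,\beta]$ applies ($N$ is a $p$-group, hence solvable, and $\beta$ is a $p'$-element), and your descent $W = [W,\beta] \le [N_i,\beta] \le N_{i-1}$ correctly forces $W = 1$; faithfulness of the $\overline{G}$-action on $N$ is exactly what is needed at that final step to conclude $\beta = 1$. This is in substance the standard stability-group argument --- the two halves being that the upper central series of a $p$-group cuts out a $G$-central chain in a normal subgroup, and that the stability group of a chain in a $p$-group is a $p$-group --- and is comparable to Schmidt's own proof. One caveat: your parenthetical alternative route via P.~Hall's nilpotence theorem plus the criterion that a $p'$-automorphism trivial on $N/\Phi(N)$ is trivial would need an extra step, since the given chain need not lie below $\Phi(N)$ (stability only yields $[N,\beta] \le N_{r-1}$, not $[N,\beta] \le \Phi(N)$); but your main argument does not rely on it.
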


\begin{thm}[Theorem 5.2.3 from \cite{Sch2}] \label{523}
If $M$ is permutable in the finite group $G$, then $M^G/M_G$ is hypercentrally embedded in $G$.
\end{thm}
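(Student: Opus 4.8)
The plan is to reduce the assertion to a statement about a single normal $p$-subgroup and then apply Lemma \ref{522}. First I would pass to the quotient $G/M_G$: permutability, cores, and normal closures all behave correctly under this quotient, and $M^G/M_G$ is hypercentrally embedded in $G$ precisely when $(M/M_G)^{G/M_G}$ is hypercentrally embedded in $G/M_G$. So there is no loss in assuming $M_G=1$, and the goal becomes to show that the normal subgroup $M^G$ is hypercentrally embedded in $G$.

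Since a permutable subgroup of a finite group is subnormal (Ore) and, by the It\^o--Sz\'ep theorem, the core-free permutable subgroup $M$ is nilpotent, I would write $M=P_1\times\cdots\times P_k$ as the product of its Sylow subgroups. Each $P_i$ is again permutable in $G$, each has trivial core (as $(P_i)_G\le M_G=1$), and $M^G=\prod_i P_i^G$ is the product of the normal subgroups $P_i^G$. A product of hypercentrally embedded normal subgroups is again hypercentrally embedded, since the defining central chains may be concatenated; hence it suffices to treat a single core-free permutable $p$-subgroup $P=P_i$ and to prove that $P^G$ is hypercentrally embedded.

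I would next show that $P^G$ is a normal $p$-group. Let $g$ be any $p'$-element of $G$. Permutability makes $H:=P\langle g\rangle=\langle g\rangle P$ a subgroup in which $P$ is a Sylow $p$-subgroup; as $P$ is subnormal in $G$ it is subnormal in $H$, and a subnormal Sylow subgroup is normal, so $g\in N_G(P)$. Thus $O^p(G)\le N_G(P)$, the index $[G:N_G(P)]$ is a power of $p$, and $P$ has only finitely many, say $m=p^b$, conjugates $P^{(1)},\dots,P^{(m)}$. By permutability the product $P^{(1)}\cdots P^{(m)}$ is a subgroup, and an easy induction (at each step the product of two permutable $p$-subgroups is a subgroup of $p$-power order) shows it is a $p$-group; being the join of all conjugates of $P$ it equals $P^G$. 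By Lemma \ref{522} it now remains to show that $G/C_G(P^G)$ is a $p$-group, equivalently that every $p'$-element of $G$ centralizes $P^G$.

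This last step is the main obstacle, and is where the finer structure theory of permutable subgroups must be brought in. Normalization is already available: the argument above, applied to each (still core-free) conjugate $P^{(j)}$, shows that every $p'$-element $g$ normalizes each $P^{(j)}$. To upgrade this to centralization I would aim to prove that $[P,g]\le P_G=1$ for each $p'$-element $g$; granting this for every conjugate and using $P^G=\prod_j P^{(j)}$ gives $[P^G,g]=1$, whereupon coprimality forces $g$ to act trivially on the $p$-group $P^G$. Establishing $[P,g]=1$ is the heart of the matter: one studies the coprime action of $\langle g\rangle$ on $P$ inside $H=P\langle g\rangle$, exploiting that $P$ permutes with every conjugate of $\langle g\rangle$ and arguing that a nontrivial $[P,g]$ would yield a $G$-invariant, and hence core, subgroup of $P$, contradicting $P_G=1$. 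I expect this coprime-action analysis, rather than any of the preceding reductions, to carry the real weight of the proof; it is precisely the technical core of the Maier--Schmid result recorded in \cite{Sch2}.
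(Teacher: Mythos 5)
The paper gives no proof of Theorem \ref{523} at all: it is the Maier--Schmid theorem, imported verbatim from \cite{Sch2} and used as a black box, so your attempt has to stand on its own. It does not, and the gap sits exactly where the theorem is deep. Your reductions are correct and standard: passing to $G/M_G$, nilpotency of a core-free permutable subgroup (It\^o--Sz\'ep), the decomposition $M^G=\prod_i P_i^G$, and the Ore-subnormality argument that every $p'$-element $g$ normalizes a core-free permutable $p$-subgroup $P$, so that $P^G$ is a normal $p$-group. But after invoking Lemma \ref{522}, what remains is to show that $G/C_G(P^G)$ is a $p$-group, i.e.\ that $p'$-elements \emph{centralize} $P^G$, and at this point you offer only an intention: you ``would aim to prove'' $[P,g]\le P_G=1$ by ``arguing that a nontrivial $[P,g]$ would yield a $G$-invariant subgroup of $P$,'' and you then concede that this analysis ``is precisely the technical core of the Maier--Schmid result recorded in \cite{Sch2}.'' That is circular: upgrading normalization to centralization is the entire content of the theorem, and the mechanism you gesture at has no visible implementation --- $[P,g]$ is normalized by $P$ and by $g$, but nothing in sight forces any nontrivial subgroup of $P$ built from it to be normal in all of $G$. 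The known proofs need substantially more than coprime action inside $P\langle g\rangle$: an induction on $|G|$ through minimal normal subgroups combined with complement-theoretic or cohomological input of Gasch\"utz type.

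There is also a secondary gap in your reduction to $p$-groups: you assert without argument that each Sylow subgroup $P_i$ of $M$ is again permutable in $G$. This is a genuine result, not a formality, and the lemmas quoted in this paper give something weaker: Lemma \ref{512} makes $P_i$ modular in $\cl(G)$, and Lemma \ref{519} then yields only a dichotomy --- either $P_i$ is permutable, or $G/(P_i)_G$ splits as a direct product with a nonabelian $P$-group factor $P_i^G/(P_i)_G$ --- and ruling out the second branch (whose factor is \emph{not} hypercentrally embedded) requires a separate argument exploiting the permutability of $M$ itself. So even granting your final step, the reduction on which your whole plan rests is incomplete as written.
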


The next lemma is the key result in our proof of Theorem \ref{mod}.

\begin{lemma} \label{minmod}
Say $M \leq G$ is nontrivial and modular in $\cl(G)$, and no nontrivial modular element of $\cl(G)$ is properly contained in $M$.  If $M$ is not a minimal normal subgroup of $G$ then $M^G$ contains a subgroup of prime order that is normal in $G$.
\end{lemma}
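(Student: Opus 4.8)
The plan is to use the minimality hypothesis on $M$ to force $M$ into a very restricted shape, and then apply the classification of modular subgroups of prime power order in Lemma \ref{519}. The one piece of leverage used repeatedly is that every normal subgroup of $G$ is modular in $\cl(G)$, so any nontrivial normal subgroup of $G$ contained in $M$ must equal $M$. First I would show that $M_G=1$. If $M_G \neq 1$, then $M_G$ is a nontrivial normal subgroup of $G$, hence a nontrivial modular element of $\cl(G)$ lying in $M$; minimality forces $M_G=M$, so $M \lhd G$. But then a minimal normal subgroup $N$ of $G$ contained in $M$ is again a nontrivial modular element inside $M$, so $N=M$, making $M$ a minimal normal subgroup and contradicting the hypothesis. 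Hence $M_G=1$. Next I would show that $M$ is a $p$-group. Since $\cl(\langle M,x\rangle)$ is the interval $[1,\langle M,x\rangle]$ in $\cl(G)$, modularity of $M$ passes to each such interval, so Lemma \ref{512} applies with $M_G=1$ and shows that every Sylow subgroup of $M$ is modular in $\cl(G)$. If $M$ were divisible by two distinct primes, such a Sylow subgroup would be a proper nontrivial modular element of $\cl(G)$ inside $M$, again contradicting minimality; so $M$ is a $p$-group.

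Now $M$ has prime power order, is modular in $\cl(G)$, and satisfies $M_G=1$, so Lemma \ref{519}(c) leaves exactly two possibilities: either $M$ is permutable in $G$, or $G=M^G \times K$ with $M^G$ a nonabelian $P$-group of order prime to $|K|$. I would rule out the second possibility directly. Writing $M^G=CE$ with $E=O_p(M^G)$ its (elementary abelian, normal) Sylow $p$-subgroup, the $p$-group $M$ must lie inside $E$; since $E$ is characteristic in $M^G$ and $M^G \lhd G$, we get $E \lhd G$, whence $M^G=\langle M^g : g\in G\rangle \leq E$, contradicting $E<M^G$. This leaves the permutable case.

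When $M$ is permutable, Theorem \ref{523} shows that $M^G/M_G=M^G$ is hypercentrally embedded in $G$: there is a chain $1=N_0 \leq N_1 \leq \cdots \leq N_r=M^G$ of (necessarily normal) subgroups of $G$ with $[N_i,G] \leq N_{i-1}$. As $M \neq 1$ forces $M^G \neq 1$, the term $N_1$ is nontrivial, and $[N_1,G] \leq N_0=1$ places $N_1$ inside $Z(G)$. Any subgroup of prime order in the abelian group $N_1$ is therefore central, hence normal in $G$, and lies in $N_1 \leq M^G$, which is exactly the desired conclusion.

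The step I expect to be the main obstacle is recognizing that Lemma \ref{519} only becomes usable after the two preliminary reductions (to $M_G=1$ and to $M$ being a $p$-group), each of which must be squeezed out of the minimality hypothesis rather than from any deep group theory; and then, in the permutable branch, converting ``$M^G$ is hypercentrally embedded'' into a concrete normal subgroup of prime order by extracting the first nontrivial, necessarily central, term of the hypercentral chain. Confirming that the $P$-group branch is genuinely impossible rather than merely unhelpful is the one place where one argues instead of quoting, but the normal-closure computation above disposes of it cleanly.
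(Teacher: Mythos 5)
Your preliminary reductions (to $M_G=1$ and to $M$ of prime power order) are correct and match the paper, and your treatment of the permutable branch is also correct --- indeed more direct than the paper's, which routes through Lemma \ref{522} and a Sylow argument, whereas you extract a central subgroup straight from the definition of hypercentral embedding. (Small slip there: the chain witnessing hypercentral embedding is only weakly increasing, so $N_1$ itself may be trivial; take instead the first nontrivial term $N_i$, which satisfies $[N_i,G]\leq N_{i-1}=1$.) The genuine error is in the $P$-group branch: it cannot be ruled out, and your argument for ruling it out fails. You assert that the prime-power group $M$ ``must lie inside $E$,'' but this silently assumes that the prime dividing $|M|$ is $p$, the prime of $E=O_p(M^G)$. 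Nothing forces this: $M$ may be a $q$-group, where $q$ is the order of the cyclic complement $C$, in which case $M$ is a conjugate of $C$ and lies in no conjugate of $E$. The smallest example already exhibits this: $G=S_3$ and $M=\langle (12)\rangle$. Here $\cl(S_3)$ is a modular lattice, so $M$ is modular and satisfies all hypotheses of the lemma; $M$ is not permutable in $S_3$ (for instance $M\langle (13)\rangle \neq \langle (13)\rangle M$); and indeed $G=M^G\times K$ with $M^G=S_3$ a nonabelian $P$-group (with $E=A_3$, $C$ of order $2$) and $K=1$. Your argument would ``prove'' this situation impossible.

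What your normal-closure computation does establish is only that in case (b) one cannot have $M\leq E$, so $M$ must be a $q$-group; but then the conclusion of the lemma has to be verified in that case rather than obtained vacuously. This is what the paper does, and the verification is short: take $X\leq E$ of order $p$. Then $E$ normalizes $X$ since $E$ is abelian; $C$ normalizes $X$ since each nonidentity element of $C$ acts on $E$ by raising every element to a fixed power, so every subgroup of $E$ is $C$-invariant; and $K$ normalizes $X$ since $K$ centralizes $M^G$. Hence $X\lhd G$ and $X\leq E\leq M^G$ is the required normal subgroup of prime order. With this replacement for your case (b), the rest of your proof stands.
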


\begin{proof}
If $M \lhd G$ then $M$ is a minimal normal subgroup of $G$.  Assume that $M$ is not normal in $G$.  Since $M_G<M$, we have $M_G=1$.  Certainly $M$ is a modular element of every interval of $\cl(G)$ containing $M$.  In particular, $M$ is modular in $\cl(\langle M,x \rangle)$ for every $x \in G$ having prime power order.  By Lemma \ref{512}, every Sylow subgroup of $M$ is modular in $\cl(G)$.  Therefore, $M$ is a $q$-group for some prime $q$.  By Lemma \ref{519}, either
\begin{itemize}
\item[(a)] $M$ is permutable in $G$, or
\item[(b)] $G=M^G \times K$, where $M^G$ is a nonabelian $P$-group of order coprime with $|K|$.
\end{itemize}

Assume first that (a) holds.  Then every $G$-conjugate of $M$ is permutable in $G$.  It follows that $M^G$ is the setwise product of the $G$-conjugates of $M$ and therefore is a normal $q$-subgroup of $G$.  Let $Q$ be a Sylow $q$-subgroup of $G$.  Then $M^G \lhd Q$ and, as is well known, $M^G \cap Z(Q) \neq 1$.  Let $X \leq M^G \cap Z(Q)$ have order $q$.  By Theorem \ref{523}, $M^G$ is hypercentrally embedded in $G$.  Now, by Lemma \ref{522}, $G/C_G(M^G)$ is a $q$-group.  Since $C_G(M^G) \leq C_G(X)$, we see that $[G:C_G(X)]$ is a power of $q$.  On the other hand, the Sylow $q$-subgroup $Q$ is contained in $C_G(X)$.  Thus we must have $C_G(X)=G$, so $X \lhd G$.

Now assume that (b) holds.  Write $M^G=CE$, where $E$ is elementary abelian and $C$ is generated by an element conjugating every $x \in E$ to some non-trivial power of $x$.  Let $X \leq E$ have prime order $p$.  Certainly $X \lhd E$.  Also, $C$ normalizes $X$.  Finally, since $K$ commutes with $M^G$, we see that $K$ normalizes $X$, so $X \lhd G$.
\end{proof}

\begin{proof}[Proof of Theorem \ref{mod}] We prove Theorem \ref{mod} by induction on $r=\chl(G)$.  When $r=0$ we have $G=1$ and there is nothing to prove.  When $r=1$, $G$ is simple and the claim of the theorem follows from the fact that in this case the only modular elements of $\cl(G)$ are $1$ and $G$ (this is Theorem 5.3.1 in \cite{Sch2} and also follows immediately from Lemma \ref{minmod}).  Assume now that $r>1$.  Let
\[
1=M_0<M_1<\ldots<M_t=G
\]
be a chain of modular elements in $\cl(G)$ that is properly contained in no other chain of modular elements.  We wish to show that $t \leq r$.  Assume first that $M_1 \lhd G$. Then $M_1$ is a minimal normal subgroup of $G$ and $\chl(G/M_1)=r-1$.  Also, the interval $[M_1,G]$ in $\cl(G)$ is isomorphic with $\cl(G/M_1)$.  Since each $M_i$ ($i \geq 1$) is a modular element of $[M_1,G]$, we have $t-1 \leq r-1$ by inductive hypothesis.

Assume now that $M_1$ is not normal in $G$.  By Lemma \ref{minmod}, there is some $X \leq M_1^G$ that is a normal subgroup of $G$ having prime order $p$.  We claim that the chain
\[
1<X=M_0X<M_1X\leq M_2X \leq \ldots \leq M_tX=G
\]
contains $t+1$ distinct elements, all of which are modular in $\cl(G)$.  Indeed, modularity follows from Lemma \ref{modjoin}.  Consider the smallest $s$ such $X \leq M_s$.  Note $s>1$ by our assumption that our original chain of modular elements is properly contained in no larger such chain.  If $1 \leq j<s$ then
\[
|M_{j-1}X|=p|M_{j-1}|<p|M_j|=|M_jX|,
\]
so $M_{j-1}X<M_jX$.  If $j\geq s$ then $M_jX=M_j$, so if $j>s$ then $M_{j-1}X<M_jX$.  Thus our claim holds.  Now we can apply our inductive hypothesis as we did in the case $M_1 \lhd G$, using $X$ in place of $M_1$. 
\end{proof}

\section{Values of $\mml(G)-\chl(G)$} \label{chdiffex}

Our main result in this section is Theorem \ref{disc}, which shows that $\mml(G)-\chl(G)$ can take any value in $\nn \setminus \{1\}$.  


\begin{lemma} \label{liusag}
Let $G$ be a group, let $N \lhd G$, let $B \leq G$ and let $A$ be a maximal subgroup of $B$.  Then either $AN=BN$ or $AN$ is a maximal subgroup of $BN$.
\end{lemma}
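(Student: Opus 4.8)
The plan is to prove Lemma \ref{liusag} by a direct order/index computation, treating $AN$ and $BN$ as subgroups of $G$ and using the fact that $N$ is normal so all the products in sight are genuine subgroups. Since $N \lhd G$ and $A, B \leq G$, both $AN$ and $BN$ are subgroups, with $AN \leq BN$ because $A \leq B$. So the only question is the size of the interval $[AN, BN]$ in $\cl(G)$, and I want to show this interval has length at most $1$, i.e. $AN = BN$ or $AN$ is covered by $BN$.

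\textbf{Main approach.} The cleanest route I would take is to exploit the isomorphism theorems relating the interval $[AN, BN]$ to an interval in $B$. Since $N \lhd G$, I have $A(B \cap N) \leq B$ and in fact $AN \cap B$ is a subgroup of $B$ containing $A$. The key structural fact is that intersecting with $B$ gives an order-preserving map from subgroups $H$ with $AN \leq H \leq BN$ to subgroups of $B$ containing $A(B\cap N)$. Concretely, for any such $H$, by the modular (Dedekind) law one has $H = (H \cap B)N$, since $N \leq H \leq BN = BN$ forces $H = H \cap BN = (H \cap B)N$. This shows every intermediate subgroup $H$ in $[AN, BN]$ has the form $H = CN$ for $C = H \cap B$ a subgroup with $A \leq A(B\cap N) \leq C \leq B$.

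\textbf{Key steps in order.} First I would verify $AN, BN \in \cl(G)$ and $AN \leq BN$. Second, suppose $AN < H \leq BN$ with $H \in \cl(G)$; set $C = H \cap B$, so $H = CN$ by the Dedekind law. Then $A \leq C \leq B$, and since $A$ is maximal in $B$, either $C = A$ or $C = B$. If $C = A$ then $H = AN$, contradicting $AN < H$; so $C = B$, giving $H = BN$. This shows no subgroup $H$ lies strictly between $AN$ and $BN$, which is exactly the statement that either $AN = BN$ or $AN$ is maximal in $BN$. So the whole argument reduces to the single application of the Dedekind law $H = (H \cap B)N$ together with the maximality of $A$ in $B$.

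\textbf{Expected obstacle.} The only delicate point is justifying $H = (H \cap B)N$ for an arbitrary intermediate $H$; this is where normality of $N$ and the inclusion $N \leq AN \leq H$ are essential, and it is a direct instance of Dedekind's modular law (cited in the excerpt as \cite[1.1.1]{KuSt}) applied inside the interval $[N, BN]$. One must take care that $H \geq AN \geq N$ so that $N \leq H$ genuinely holds, which is what makes the modular identity collapse to the clean form $H = (H\cap B)N$. Everything else is routine: the maximality of $A$ in $B$ does all the remaining work, and there is no need for any finiteness or solvability hypothesis beyond the standard subgroup-product facts.
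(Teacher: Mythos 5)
Your proof is correct and is essentially identical to the paper's own argument: both take an intermediate subgroup $X$ with $AN \leq X \leq BN$, note that $A \leq X \cap B \leq B$ so maximality forces $X \cap B \in \{A, B\}$, and then use the Dedekind modular law $(X \cap B)N = X \cap BN = X$ to conclude $X = AN$ or $X = BN$. No meaningful differences to report.
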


\begin{proof}
Suppose $AN \leq X \leq BN$.  Then $X \cap A$ is either $B$ or $A$.  If $X \cap B=B$ then $BN \leq X$ so $X=BN$.  If $X \cap B=A$ then
\[
AN=(X \cap B)N=X \cap BN=X,
\]
by the Dedekind modular law.
\end{proof}

%

\begin{lemma} \label{projdp}
Let $G$ be a finite group and let $1 \neq N \lhd G$.  Then $\mml(G)>\mml(G/N)$
\end{lemma}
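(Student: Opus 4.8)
The plan is to transport a shortest maximal chain of $\cl(G)$ into the interval $[N,G]$ and to show that this transport must lose at least one step. Concretely, I would fix a maximal chain
\[
1 = M_0 < M_1 < \cdots < M_r = G
\]
of $\cl(G)$ with $r = \mml(G)$, and pass to the subgroups $M_i N$. Applying Lemma \ref{liusag} with $A = M_{i-1}$ and $B = M_i$ shows that each consecutive pair satisfies either $M_{i-1}N = M_i N$ or $M_{i-1}N$ is maximal in $M_i N$. After discarding the repetitions, the surviving groups form a chain from $N = M_0 N$ to $G = M_r N$ in which every step is a covering relation; such a saturated chain reaching both endpoints is a maximal chain of $[N,G]$. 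Since $[N,G] \cong \cl(G/N)$ by the correspondence theorem, its length is at least $\mml(G/N)$.

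Writing $c$ for the number of indices $i$ with $M_{i-1}N = M_i N$, the surviving maximal chain has length exactly $r - c$, so $r - c \geq \mml(G/N)$. It therefore suffices to exhibit a single collapse, that is, to show $c \geq 1$: this would give $\mml(G) = r \geq \mml(G/N) + c > \mml(G/N)$, as desired.

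To force a collapse, I would analyze each step through the intersection with $N$. For each $i$, the subgroup $M_{i-1}(M_i \cap N)$ (a genuine subgroup, since $M_i \cap N \lhd M_i$) lies between the maximal pair $M_{i-1} < M_i$, so it equals one of them. If $M_{i-1}(M_i \cap N) = M_i$, then $M_i N = M_{i-1}(M_i \cap N)N = M_{i-1}N$, a collapse. If instead $M_{i-1}(M_i \cap N) = M_{i-1}$ holds for every $i$, then $M_i \cap N = M_{i-1} \cap N$ for all $i$, and chaining these equalities yields $N = M_r \cap N = M_0 \cap N = 1$, contradicting $N \neq 1$. Hence at least one step collapses, so $c \geq 1$ and the proof is complete.

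I expect the only delicate points to be the two bookkeeping verifications rather than any deep obstacle: first, that after deleting repetitions the image is truly a \emph{maximal} chain of $[N,G]$ (covering relations compose to a saturated chain spanning the interval), and second, the Dedekind-style identity $(M_i \cap N)N = N$ used to recognize the collapse. The essential idea is simply that $N \neq 1$ is detected by the intersections $M_i \cap N$ climbing from $1$ to $N$, which cannot happen without producing at least one collapse upon multiplying by $N$.
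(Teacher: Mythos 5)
Your proof is correct and takes essentially the same route as the paper: transport the chain via $M_i \mapsto M_iN$, invoke Lemma \ref{liusag} to see that the distinct images form a maximal chain in $\cl(G/N)$ (equivalently $[N,G]$), and show that at least one term repeats. The only difference is in how the repetition is exhibited: the paper takes the smallest $j$ with $N \leq M_j$, where maximality of $M_{j-1}$ in $M_j$ forces $M_{j-1}N = M_jN$, while you derive it from the dichotomy $M_{i-1}(M_i \cap N) \in \{M_{i-1},\, M_i\}$ together with the observation that the intersections $M_i \cap N$ must climb from $1$ to $N$ -- a minor, equally valid variant.
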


\begin{proof}
Let
\[
1=M_0<\ldots<M_r=G
\]
be a maximal chain in $\cl(G)$.  For $0 \leq i \leq r$, set $\overline{M}_i=M_iN/N$.  By Lemma \ref{liusag}, we get a maximal chain in $\cl(G/N)$ upon appropriately erasing repeated terms from
\[
\overline{M}_0 \leq \ldots \leq \overline{M}_r.
\]
Find the smallest $j$ such that $N \leq M_j$.  Then $\overline{M}_{j-1}=\overline{M}_j$, so there is at least one repeated term to erase.  Thus $\mml(G/N)<r$.
\end{proof}

Before continuing, we remark that the statement analogous to Lemma \ref{projdp}, concerning maximal chains in an arbitrary finite lattice $L$ and saturated chains starting at a (left) modular element $\hat{0} \neq m \in L$, admits an analogous proof.  We note also that Corollary \ref{mmlgechl} follows directly from Lemma \ref{projdp}.



\begin{lemma} \label{dpsimp}
Let $G$ be a finite group and let $S$ be a nonabelian finite simple group such that no section of $G$ is isomorphic with $S$.  Then
\[
\mml(G \times S) \geq \mml(G)+2.
\]
\end{lemma}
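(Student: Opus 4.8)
The plan is to analyze a maximal chain in $\cl(G \times S)$ and show it must have length at least $\mml(G)+2$. Let $\cx\colon 1=M_0<\ldots<M_r=G\times S$ be a maximal chain of minimal length $r=\mml(G\times S)$. The natural idea is to project this chain down to $\cl(G)$ via the first-coordinate projection $\pi\colon G\times S \to G$, and separately exploit the simple factor $S$. Writing $\ov{M}_i = \pi(M_i)$, Lemma \ref{projdp} (applied with $N=S=1\times S \lhd G\times S$) already gives $\mml(G\times S) > \mml(G\times S / S) = \mml(G)$, so the content here is improving the gap from $1$ to $2$. The hypothesis that no section of $G$ is isomorphic with $S$ is what must force the extra step.

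**Key steps.**
First I would locate the smallest index $j$ with $1\times S \leq M_j$; by Lemma \ref{liusag} the projected chain $\ov M_0 \leq \ldots \leq \ov M_r$ collapses to a maximal chain in $\cl(G)$ after erasing repeats, and each collapse corresponds to an index where $\ov{M}_{i-1}=\ov M_i$, i.e.\ a ``vertical'' step that lives inside the $S$-direction. To get a drop of $2$ rather than $1$, I would argue that there must be \emph{at least two} such repeated terms. The crucial observation is that the subgroup $M_{j-1}$ sits below $M_j$, where $M_j$ contains $1\times S$; I would examine the section $M_j/M_{j-1}$, or more precisely the way $S$ is built up inside the chain. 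Because $S$ is nonabelian simple, if the chain passed from not-containing $S$ to fully-containing $S$ in a single maximal step, then that step $M_{j-1}<M_j$ would exhibit $S$ (or a quotient realizing $S$ as a section) as essentially a one-step extension controlled by $G$-data, and the no-section hypothesis is designed to obstruct exactly this. I would make this precise by showing $\pi(M_j)=\pi(M_{j-1})$ forces $M_{j-1}\cap(1\times S)$ to be a proper subgroup whose normalizer behavior, combined with simplicity of $S$, cannot let a single maximal step jump all of $S$.

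**The main obstacle.**
The hard part will be ruling out the possibility that the chain ascends through the $S$-factor using only a \emph{single} repeated projection term while remaining maximal. Concretely, I must show that one cannot have $\ov M_{i-1}=\ov M_i$ at exactly one index. The cleanest route is probably a subdirect-product analysis: for each $M_i$, consider $A_i = M_i \cap (1\times S)$ and $B_i=\pi(M_i)$, and track how $A_i$ grows. Since $S$ is nonabelian simple, any proper nontrivial subgroup of $S$ that appears as $A_i$ cannot be normal, so passing from $A_i=1$ to $A_i=S$ requires the intermediate Goursat correspondence to involve an isomorphism between a section of $B_i$ and a section of $S$; the assumption that $G$ has no section isomorphic to $S$ forces $A_i$ to jump from a proper subgroup directly, but maximality of each step then forbids clearing all of $S$ at once. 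I expect the delicate bookkeeping to be in verifying that these two ``$S$-direction'' steps are genuinely distinct from the steps that project isomorphically onto a maximal step in $\cl(G)$, so that the total count is $\mml(G)+2$ rather than being absorbed. I would finish by assembling: the projected chain contributes at least $\mml(G)$ steps, and the $S$-factor contributes at least $2$ additional steps that do not project to strict inclusions, giving $r \geq \mml(G)+2$.
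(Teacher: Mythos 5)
Your route is genuinely different from the paper's, and it can be completed, but as written the two decisive steps are announced rather than proved. For comparison: the paper argues by induction on $|G|$ and never counts repeated terms at all. It takes a shortest maximal chain of $G\times S$ and examines only its top step $M_{r-1}$. If $J=\pi(M_{r-1})\neq G$, then $M_{r-1}=J\times S$ with $J$ maximal in $G$, and the inductive hypothesis applied to $J\times S$ gives $r\geq 1+\mml(J\times S)\geq 3+\mml(J)\geq 2+\mml(G)$. If $J=G$, then Th\'evenaz's description of maximal subgroups of direct products \cite[Lemma 1.3]{The}, together with the fact that $S$ is not a quotient of $G$ and that a nonabelian simple group has nontrivial proper subgroups, forces $M_{r-1}=G\times H$ with $1\neq H<S$ maximal, and Lemma \ref{projdp} finishes. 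Your plan instead fixes one chain and shows its projection to $\cl(G)$ must collapse at two distinct indices. This trades the induction for a finer analysis inside the chain, and if carried out it actually proves the stronger per-chain statement that \emph{every} maximal chain of $G\times S$ has at least two repeated projections; both approaches use the same two external inputs (Th\'evenaz's lemma and Lemma \ref{projdp}/\ref{liusag}), and the no-section hypothesis plays the identical role of excluding diagonal maximal subgroups.

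The gaps are exactly at the points you flag as ``delicate bookkeeping,'' and they are where the content lies. Write $A_i=M_i\cap(1\times S)$ and $\ov{M}_i=\pi(M_i)$. You need precisely two facts. (i) \emph{Every index at which $A_{i-1}<A_i$ is a repeated index.} Without this, an $A$-growth step could coincide with a strict projected step and your count collapses to $r\geq\mml(G)+1$. The proof is short but must be given: $A_i\lhd M_i$ and $A_i\not\leq M_{i-1}$ (else $A_i\leq A_{i-1}$), so $M_{i-1}<M_{i-1}A_i\leq M_i$, whence $M_{i-1}A_i=M_i$ by maximality of the step; applying $\pi$, which kills $A_i$, gives $\ov{M}_{i-1}=\ov{M}_i$. (ii) \emph{$A_i$ cannot pass from $1$ to $S$ in a single step}, so it strictly increases at least twice, and then (i) yields two repeats and $r\geq\mml(G)+2$ via Lemma \ref{liusag}. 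For (ii): if $A_{i-1}=1$ and $A_i=S$, then $M_i=\ov{M}_i\times S$ (any subgroup containing $1\times S$ is such a product), and by (i) $\pi(M_{i-1})=\ov{M}_i$; so $M_{i-1}$ is a maximal subgroup of $\ov{M}_i\times S$ projecting onto $\ov{M}_i$ and meeting $1\times S$ trivially. By \cite[Lemma 1.3]{The} it is then either $\ov{M}_i\times V$ with $V$ maximal in $S$ --- forcing $V=1$, impossible since $\ov{M}_i\times 1$ is not maximal in $\ov{M}_i\times S$ when $S$ has nontrivial proper subgroups --- or a diagonal subgroup over an isomorphism $\ov{M}_i/N\cong S$, which exhibits $S$ as a section of $G$, contrary to hypothesis. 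Note it is this dichotomy, not the non-normality of proper subgroups of $S$ nor any ``normalizer behavior'' of $M_{j-1}\cap(1\times S)$, that does the work; those remarks in your sketch point in the wrong direction. With (i) and (ii) supplied, your argument is correct.
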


\begin{proof}
We proceed by induction on $|G|$, the claim in the case $G=1$ being that a nonabelian simple group has a nontrivial proper subgroup.  Now assume $|G|>1$ and let
\[
1=M_0<\ldots<M_r=G \times S
\]
be a maximal chain in $\cl(G \times S)$ with $\mml(G \times S)=r$.  Let $\pi:G \times S \rightarrow G$ map $(g,s)$ to $g$.  Let $J=\pi(M_{r-1})$.  If $J \neq G$ then, since  $M_{r-1}\leq J \times S$ is maximal in $G \times S$, we have $M_{r-1}=J \times S$.  Our inductive hypothesis applies, and we get
\[
\mml(G \times S)=1+\mml(J \times S) \geq 3+\mml(J) \geq 2+\mml(G).
\]
If $J=G$ then, since $G$ has no quotient isomorphic with $S$, we must have $M_{r-1}=G \times H$ for some maximal $H<S$ (see, for example, \cite[Lemma 1.3]{The}).  Since $S$ is nonabelian simple, we have $H \neq 1$.  Now
\[
\mml(G \times S)=1+\mml(G \times H)\geq 2+\mml(G),
\]
the last inequality following from Lemma \ref{projdp}.
\end{proof}


\begin{thm} \label{disc}
For every $n \in \nn \setminus \{1\}$, there exists some finite group $G$ such that $\mml(G)-\chl(G)=n$.
\end{thm}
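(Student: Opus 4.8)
The plan is to realise the value $0$ with solvable groups and every value $n \geq 2$ with a direct product of pairwise nonisomorphic nonabelian simple groups, the smallest factor always being $A_5$. For $n=0$, any solvable group $G$ satisfies $\mml(G)=\chl(G)$ by Theorem \ref{solv}, so $\mml(G)-\chl(G)=0$. For $n \geq 2$, I would set $m=n-1 \geq 1$ and take $G=S_1 \times \cdots \times S_m$, where $S_1=A_5$ and $S_2,\ldots,S_m$ are chosen so that the $S_i$ are pairwise nonisomorphic nonabelian simple groups; the goal is to prove $\chl(G)=m$ and $\mml(G)=2m+1$, whence $\mml(G)-\chl(G)=(2m+1)-m=m+1=n$.

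Two of the three ingredients are straightforward. Each $S_i$ is a minimal normal subgroup of $G$ contributing exactly one chief factor, so $\chl(G)=m$. For the lower bound, observe that, since the factors are nonabelian simple and pairwise nonisomorphic, $S_k$ is not a composition factor, and hence not a section, of $S_1 \times \cdots \times S_{k-1}$; iterating Lemma \ref{dpsimp} therefore gives $\mml(G) \geq \mml(A_5)+2(m-1)$. A direct inspection yields $\mml(A_5)=3$: the maximal subgroups of $A_5$ have orders $12$, $10$ and $6$, none of prime order, so $\cl(A_5)$ has no maximal chain of length $2$, while $1<C_3<S_3<A_5$ is a maximal chain of length $3$. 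Hence $\mml(G) \geq 2m+1$, and it remains only to prove the matching upper bound $\mml(G) \leq 2m+1$.

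This upper bound is the crux. The ``product'' subgroups $T_1 \times \cdots \times T_m$ (with $T_i \leq S_i$) form a sublattice of $\cl(G)$ isomorphic to $\cl(S_1) \times \cdots \times \cl(S_m)$, and every maximal chain of $\cl(G)$ lying inside this sublattice has length at least $\sum_i \mml(S_i) \geq 3m$, using $\mml(S_i) \geq 3$ from Theorem \ref{nonsolv}; for $m \geq 2$ this exceeds $2m+1$. Moreover, since the interval in $\cl(G)$ from $S_1 \times \cdots \times S_{k-1}$ up to $S_1 \times \cdots \times S_k$ is isomorphic to $\cl(S_k)$, any maximal chain passing through the partial product $S_1 \times \cdots \times S_{k-1}$ must spend at least $\mml(S_k) \geq 3$ steps in that interval. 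A chain attaining length $2m+1$ must therefore leave the product sublattice and exploit \emph{diagonal} (subdirect) subgroups, saving one step for each factor beyond the first. Accordingly, I would choose the $S_i$ so that a suitable section of $S_1 \times \cdots \times S_{k-1}$ embeds into $S_k$, and then construct, by induction on $k$, a maximal chain of $\cl(S_1 \times \cdots \times S_k)$ from one of $\cl(S_1 \times \cdots \times S_{k-1})$ by inserting exactly two subgroups, one of them a diagonal maximal subgroup of $S_1 \times \cdots \times S_k$. The delicate part, which I expect to be the main obstacle, is to verify that these two insertions are genuine covering relations giving a maximal chain, while simultaneously keeping the factors pairwise nonisomorphic so that the lower bound from Lemma \ref{dpsimp} is not lost; the subdirect-product analysis behind Lemma \ref{dpsimp}, together with Lemma \ref{projdp}, is what I would use to confirm both the covering relations and the two matching bounds.
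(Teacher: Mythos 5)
Your plan is in essence the paper's own: the paper also realizes every $n\ge 2$ by a direct product of pairwise nonisomorphic simple groups tied together along diagonal copies of $A_5$, with the lower bound coming from iterating Lemma \ref{dpsimp} and the upper bound from an explicit maximal chain through diagonal subgroups. (The paper takes $G_m=\prod_{i=1}^m L_2(p_i)$ with $31=p_1<p_2<\cdots$ and $p_i\equiv 1\bmod 5$, proves $\mml(G_m)=2m+2$, so that $G_m$ realizes $n=m+2$, and handles $n=2$ separately by $A_5$ alone; your variant with $A_5$ folded in as the first factor and target $\mml(G)=2m+1$ is workable and slightly more uniform.) However, two of your steps are genuinely wrong as stated, and the part you yourself call the crux is deferred rather than proved.

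First, ``$S_k$ is not a composition factor, and hence not a section'' is a false inference: a simple group can be a section without being a composition factor ($A_5$ is a section of $A_6$ and of $L_2(31)$, but a composition factor of neither). This matters here, because your construction forces each $S_k$, $k\ge 2$, to contain a copy of $A_5$, so the product unavoidably has simple sections that are not composition factors; the lower bound survives only because $A_5$ is adjoined \emph{first} and the later factors must be chosen so that $S_k$ is not a section of the earlier ones --- which requires an actual argument, e.g.\ the paper's: taking $S_k=L_2(p_k)$ with $p_k$ increasing primes, $p_k$ does not divide $|S_1\times\cdots\times S_{k-1}|$, so no section of that product is isomorphic to $L_2(p_k)$. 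Second, your inductive upper-bound scheme cannot work as described: since the $S_i$ are pairwise nonisomorphic nonabelian simple groups, $S_1\times\cdots\times S_k$ has \emph{no} diagonal maximal subgroups at all --- by \cite[Lemma 1.3]{The}, every maximal subgroup of such a product has the form $S_1\times\cdots\times M_i\times\cdots\times S_k$ with $M_i$ maximal in $S_i$ --- and the chain for $k$ factors is not obtained from the chain for $k-1$ factors by inserting two subgroups, since its lower portion must be re-twisted into diagonal form across the new factor (the old chain is not a subchain of the new one). The diagonal subgroups occur as maximal subgroups of intermediate groups, e.g.\ $\Delta A_5$ inside $T_1\times T_2\cong A_5\times A_5$, never of $G$ itself. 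Finally, you never exhibit the family $\{S_i\}$: one needs infinitely many pairwise nonisomorphic simple groups each possessing a \emph{maximal} subgroup isomorphic to $A_5$, which is exactly what the paper extracts from Dickson's description of the subgroups of $L_2(p)$ for $p\equiv 1\bmod 5$ together with Dirichlet's theorem; and the covering-relation verifications for the chain $1<\zz_3<A_4<\Delta A_5<\cdots<T_1\times\cdots\times T_m<\cdots<G$ (maximality of partial diagonals via simplicity of $A_5$, and of the partial products via maximality of $T_i$ in $S_i$) are precisely the content of the paper's proof that your proposal leaves unestablished.
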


\begin{proof}
We will use the following facts about the projective special linear group $L_2(p)$ over a field of prime order $p$. These facts can be found in \cite{Di}.
\begin{enumerate}
\item If $p>3$ then $L_2(p)$ is simple.  \item If $p$ is odd then $|L_2(p)|=p(p^2-1)/2$.  \item Every maximal subgroup of $A_5 \cong L_2(5)$ is isomorphic with one of $S_3$, $D_{10}$ or $A_4$.  \item Every maximal subgroup of $L_2(31)$ is isomorphic with one of $A_5$, $S_4$,$D_{30}$, $D_{32}$ or $\zz_{31}.\zz_{15}$. \item If $p \equiv 1 \bmod 5$ then $L_2(p)$ has a maximal subgroup isomorphic with $A_5$, and every subgroup of $L_2(p)$ that is isomorphic with $A_5$ is maximal.
\end{enumerate}

It is straightforward to check that each of $S_3,D_{10}$ and $A_4$ is solvable with chief length two.  It follows from fact (3) above and Theorem \ref{solv} that $\mml(A_5)=3$, so 
\begin{equation} \label{a5}
\mml(A_5)-\chl(A_5)=2.
\end{equation}
Similarly, each of $S_4,D_{30},D_{32}$ and $\zz_{31}.\zz_{15}$ is solvable, and these groups have respective chief lengths $3,3,5$ and $3$.  Combining these facts with Theorem \ref{solv}, fact (4) and $\mml(A_5)=3$, we get $\mml(L_2(31))=4$ and
\begin{equation} \label{l231}
\mml(L_2(31))-\chl(L_2(31))=3.
\end{equation}
Now let $p_1=31,p_2,\ldots$ be an infinite sequence of primes such that $p_i<p_{i+1}$ and $p_i \equiv 1 \bmod 5$ for all $i \geq 1$.  (The existence of such a sequence is guaranteed by the famous theorem of Dirichlet, see for example \cite{IrRo}.)  For $n \in \nn$, set
\[
G_n:=\prod_{i=1}^{n}L_2(p_i).
\]
We claim that
\begin{equation} \label{gn}
\mml(G_n)=2n+2
\end{equation}
for all $n \in \nn$.  Since $\chl(G_n)=n$, (\ref{gn}), together with (\ref{a5}), proves Theorem \ref{disc}.  We prove (\ref{gn}) by induction on $n$, the case $n=1$ being (\ref{l231}).  Now assume $n>1$.  By fact (2), $|G_{n-1}|$ is not divisible by $p_n$.  Applying (2) again, we see that $G_{n-1}$ has no section isomorphic with $L_2(p_n)$.  Combining fact (1) and Lemma \ref{dpsimp} with our inductive hypothesis, we get
\[
\mml(G_n) \geq 2n+2.
\]
It remains to exhibit a maximal chain of length $2n+2$ in $\cl(G_n)$.

For each $i \in [n]$, fix injective homomorphisms $\rho_i:\zz_3 \rightarrow L_2(p_i)$, $\sigma_i:A_4 \rightarrow L_2(p_i)$ and $\tau_i:A_5 \rightarrow L_2(p_i)$, such that
\[
{\rm Image}(\rho_i)<{\rm Image}(\sigma_i)<T_i:={\rm Image}(\tau_i).
\]
(The existence of such homomorphisms is guaranteed by fact (5).)  Set
\begin{eqnarray*}
D_1 & := & \{(\rho_1(x),\ldots,\rho_n(x)):x \in \zz_3\}, \\ D_2 & := & \{(\sigma_1(x),\ldots,\sigma_n(x)):x \in A_4\}, \\ D_3 & := & \{(\tau_1(x),\ldots,\tau_n(x)):x \in A_5\}.
\end{eqnarray*}
It is straightforward to confirm that $D_1 \cong \zz_3$ is a maximal subgroup of $D_2 \cong A_4$, which is in turn a maximal subgroup of $D_3 \cong A_5$.  Now set
\[
T:=\prod_{i=1}^{n}T_i,
\]
and for $0 \leq k \leq n$, set
\[
T(k):=\{(t_1,\ldots,t_n) \in T:\tau_i^{-1}(t_i)=\tau_j^{-1}(t_j) \mbox{ for }1 \leq i<j \leq k\}.
\]
Then
\[
D_3=T(n)<T(n-1)<\ldots<T(1)=T.
\]
Note that $T(k)$ is a maximal subgroup of $T(k-1)$ for all $2 \leq k \leq n$.  Indeed, if $T(k) \leq X \leq T(k-1)$, the natural projection of $X$ to $L_2(p_k)$ has image $T_k$.  Since $A_5$ is simple, the subgroup $Y$ of $X$ consisting of all $(t_1,\ldots,t_n) \in X$ such that $t_j=1$ for $j \neq k$ must be either $1$ or $T_k$.  In the first case, we have $X=T(k)$ and in the second case we have $X=T(k-1)$.  Finally, for $0 \leq k \leq n$, set
\[
L(k):=\{(x_1,\ldots,x_n) \in  G_n:x_i \in T_i \mbox{ for } 1 \leq i \leq k\}.
\]
We have
\[
T=L(n)<L(n-1)<\ldots<L(0)=G_n.
\]
Since $T_i$ is maximal in $L_2(p_i)$ for all $i \in [n]$ (fact (5)), we see that $L(k)$ is maximal in $L(k-1)$ for all $k \in [n]$.  We have now that
\[
1<D_1<D_2<D_3=T(n)<\ldots<T(1)=L(n)<\ldots<L(0)=G_n
\]
is a maximal chain of length $2n+2$.
\end{proof}

\end{document}